\newtheorem{thm}{Theorem}[section]
\newtheorem{cor}{Corollary}[section]
\newtheorem{lem}{Lemma}[section]
\theoremstyle{definition}
\newtheorem{defn}{Definition}[section]
\theoremstyle{remark}
\newtheorem{rem}{Remark}[section]
\numberwithin{equation}{section}
\begin{document}

\title[On the generalized Wiener bounded variation spaces with $p$-variable]{On the generalized Wiener bounded variation spaces with $p$-variable}%
\author{Ani Ozbetelashvili}
\address{Faculty of Exact and Natural Sciences, Ivane Javakhishvili Tbilisi State University, 13, University St., Tbilisi, 0143, Georgia}
\email{anaozbetelashvili5677@gmail.com}
\author{Shalva Zviadadze}
\address{Faculty of Exact and Natural Sciences, Ivane Javakhishvili Tbilisi State University, 13, University St., Tbilisi, 0143, Georgia}
\email{sh.zviadadze@gmail.com}
\thanks{This work was supported by Shota Rustaveli National Science Foundation  of Georgia (SRNSFG) grant no.: YS-19-480}%
\subjclass[2010]{26A45, 46B10, 46B26}
\keywords{Generalized bounded variation, variable exponent spaces}

\begin{abstract}
In this paper, we have investigated the generalized Wiener space of bounded variation with $p$-variable. Various results are obtained such as uniform convexity and reflexivity, there was characterized the set of points of discontinuity of functions from this space. For bounded exponents, it is shown the existence of right and left-hand limits in each point. Also, there is an unbounded exponent such that in corresponding generalized Wiener bounded variation space exists a function that does not have the right-hand limit at a point. Also, for some wide class of exponents in this space additivity of the variation is not fulfilled.
\end{abstract}
\maketitle

\section{introduction}
\par In 1881  C. Jordan \cite{jordan} introduced the notion of a function of bounded variation and established the relationship between those functions and monotonic ones when he was studying the convergence of the Fourier series. Later, problems in such areas as the calculus of variations, the convergence of Fourier series, geometric measure theory, mathematical physics, etc. motivated mathematicians to generalize the concept of bounded variation in various directions. Because of numerous authors who have generalized the notion of bounded variation, we do not list them.

\par The topic of function spaces with a variable exponent is a lively area of research at present, mainly due to its wide applications in the modeling of electrorheological fluids, in the study of image processing, and differential equations with non-standard growth. These aforementioned applications are mainly in Lebesgue and Sobolev spaces with variable exponent. Different aspects concerning these spaces with variable exponents can be found in \cite{uribe-fiorenza} and \cite{DHHR} and in their bibliography.

\par To shorten the expressions that will appear in further reasoning, let introduce some notations. Denote by $P:=\{Q_k\}_{k=1}^n$ the partition of $[a;b]$ where $Q_k=[t_{k-1};t_k]$ and $a=t_0<...<t_n=b$. Besides let $f(Q_k):=f(t_k)-f(t_{k-1})$ and $\Pi[a;b]$ denotes the set of all finite partitions of $[a;b]$.

\par The first attempt to investigate bounded variation spaces with variable exponent was due by H. Herda \cite{herda}. Following H. Nakano, he generalized the Wiener's $p$-th variation in the same way as $L^{p(\cdot)}$ space generalizes classical $L^p$ space\footnote{The so-called variable exponent Lebesgue space $L^{p(\cdot)}$ first was introduced by W. Orlicz \cite{orlicz}}. Namely, he considered two-variable exponent $p:\Omega\to[1;+\infty)$ where $\Omega = \{(x,y)\:|\:a\le y<x\le b\}$ and defined functional
$$
V_p^H(f):=\sup_{\Pi[a;b]}\sum_{k=1}^n|f(Q_k)|^{p(t_k,t_{k-1})},
$$
which is the Nakano modular on the space
$$
B_{p(x,y)}:= \left\{f:[a;b]\to\mathbb{R}\:|\:f(a)=0,\:V_p^H(f)<+\infty\right\}.
$$
H. Herda established various properties of such spaces, namely modular completeness, uniform convexity, and reflexivity.

\par Recently in \cite{castilo-merentes-rafeiro} authors introduced the space of functions of bounded variation with $p$-variable $BV^{p(\cdot)}$. They considered exponent $p:[a;b]\to(1;+\infty)$ such that $\sup p(x)<+\infty$ and for such exponent defined the functional by
$$
V_{[a;b]}^{p(\cdot)}(f):=\sup_{\Pi^*[a;b]}\sum_{k=1}^n|f(Q_k)|^{p(x_k)},
$$
where $\Pi^*[a;b]$ is a set of tagged partitions $P^*$ of $[a;b]$, i.e., a partition of the segment $[a;b]$ together with a finite sequence of numbers $x_1,...,x_{n}$ subject to the conditions that for each $k$, $t_{k-1}\le x_k\le t_k$.
\par  In \cite{castilo-merentes-rafeiro} authors define the space of functions of $p(\cdot)$-bounded variation by 
$$
BV^{p(\cdot)}[a;b]=\{f:[a;b]\to\mathbb{R}\:|\:f(a)=0,\:||f||_{BV^{p(\cdot)}}<+\infty\}, 
$$
where 
$$
||f||_{BV^{p(\cdot)}} =\inf\left\{\lambda>0\:|\:V_{[a;b]}^{p(\cdot)}(f/\lambda)\le1\right\} 
$$
is the Luxemburg norm. Authors of the paper \cite{castilo-merentes-rafeiro} investigated properties of the $BV^{p(\cdot)}[a;b]$ space. Namely, they proved that $BV^{p(\cdot)}[a;b]$ is Banach's space, if $q(x)\ge p(x)$ for all $x\in[a;b]$ then $BV^{p(\cdot)}[a;b]\hookrightarrow BV^{q(\cdot)}[a;b]$, they proved a Helly's principle of choice type result in $BV^{p(\cdot)}[a;b]$. Also, there is defined the analog of absolute p-continuous functions in the framework of variable space and proved that this space of absolutely continuous functions is closed and separable subspace of $BV^{p(\cdot)}[a;b]$. For the different results concerning the space $BV^{p(\cdot)}[a;b]$, and other spaces of bounded variation with $p$-variable see \cite{gimenez-mejia-merentes-rodriguez}, \cite{mejia-merentes-sanches}, \cite{mejia-merentes-sanches-valera}.
\par In the present paper, we are going to introduce the notion of bounded variation with $p$-variable differently and investigate obtained $WBV_{p(\cdot)}$ space of functions.
\par The first motivation to introduce bounded variation with $p$-variable in another way is that space $ BV^{p(\cdot)}[a;b]$ is not "stable" concerning changes in the exponent even at one point. In other words, if we change the exponent function at one point, we change the corresponding space. Therefore, in our definition, we will use the mean values of the exponent at the partition intervals. Such a definition of the bounded variation allows us to get a more "stable" space concerning changes in the exponent on the set of measure zero. Besides, below we will show that $BV^{p(\cdot)}[a;b]\subset WBV_{p(\cdot)}[a;b]$ for all exponents $p$, also we will give some example of an exponent for which $WBV_{p(\cdot)}[a;b]\backslash BV^{p(\cdot)}[a;b]\ne\emptyset$, this effect is achieved due to the fact that the constant exponent changes on a set of measure zero (see Theorem \ref{thm_anti_chadgma}).
\par Another motivation to consider the mean values of the exponent is that in 2016 independently were appeared two papers \cite{castilo-guzman-rafeiro} and \cite{kakochashvili-zviadadze} in which the Riesz bounded variation was introduced with variable exponent. Besides, the main result in these papers was a generalization of the Riesz result (about representation as to the indefinite integral of a function from the $L^p$ space) for a variable exponent Lebesgue spaces. In the \cite{castilo-guzman-rafeiro} authors considered so-called tagged partition as above in the definition of the space $BV^{p(\cdot)}[a;b]$ and they proved the corresponding result using log-Holder continuity of the exponent. On the other hand, in \cite{kakochashvili-zviadadze} authors introduced Riesz bounded variation with $p$-variable using mean values of the exponent which allowed them to prove the mentioned result for a much wider set of exponents such that the Hardy-Littlewood maximal operator is bounded on the $L^{p(\cdot)}$ space. 
\par These reasons motivated us to consider $WBV_{p(\cdot)}[a;b]$ space and investigate various properties of this space. Besides, we obtained some new results which are not considered in the papers published before.

\section{The space $WBV_{p(\cdot)}$ of functions of bounded variation}

\par Let $p:[a;b]\to [1;+\infty)$ be Lebesgue integrable function on $[a;b]$ and for interval $Q\subset [a;b]$ define
$$
\bar{p}(Q):=\left(\frac{1}{|Q|}\int_{Q}\frac{1}{p(x)}dx\right)^{-1}.
$$
Here and through the whole paper for the set $A$ the symbol $|A|$ means Lebesgue measure of the set $A$ and $\chi_A$ means the characteristic function of the set $A$. We again note that using $\bar{p}(Q)$ numbers in the exponent authors in \cite{kakochashvili-zviadadze} fully described variable exponent Sobolev space $W^{1,p(\cdot)}$ for the exponent functions for which Hardy-Littlewood maximal operator is bounded on the $L^{p(\cdot)}$.
\begin{defn}
Let consider functional 
\begin{equation}
\label{defn_wbv_space}
V_a^b(p,f):=\sup_{\Pi[a;b]}\sum|f(Q_{k})|^{\bar{p}(Q_{k})}<+\infty.
\end{equation}
We define the generalized Wiener bounded variation spaces with $p$-variable by
$$
WBV_{p(\cdot)}[a;b]=\{f:[a;b]\to\mathbb{R}\:|\:f(a)=0,\:||f||_{WBV_{p(\cdot)}}<+\infty\}, 
$$
where 
$$
||f||_{WBV_{p(\cdot)}} =\inf\left\{\lambda>0\:|\:V_a^b(p,f/\lambda)\le1\right\} 
$$
is the Luxemburg norm.
\end{defn}

\begin{rem}
\label{rem_wbv_gansazgvrebaze_vb_mimartebashi}
Let us note that in this definition exponent $p$ may be unbounded.
\end{rem}

\par Now we give reasoning which allows us to obtain some interesting results using mentioned results of H. Herda.
\par It is easy to see that each interval $Q\subset[a;b]$ determines the point in triangle $\Omega = \{(x,y)\:|\:a\le y<x\le b\}$, so, for given exponent $p(\cdot):[a;b]\to[1;+\infty)$ we construct two-variable exponent function $p(\cdot,\cdot):\Omega\to[1;\infty)$ by the following way $p(y,x)=\bar{p}(Q)$, where $Q=(x,y)$. Now using Herda's definition of the modular (see previous section) we obtain that $V_p^H(f)=V_a^b(p,f)$ for all functions $f$. Thus the results obtained by Herda are true for $WBV_{p(\cdot)}$ space and we will state them without proof.
\par By \cite[Theorem 6, Theorem 7, Theorem 8 and Theorem 9]{herda} we have following
\begin{thm}
The space $WBV_{p(\cdot)}$ is a Banach space.
\end{thm}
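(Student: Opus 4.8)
The plan is to exploit the author's key observation, made in the paragraph immediately preceding the theorem: for a fixed exponent $p(\cdot):[a;b]\to[1;+\infty)$ the functional $V_a^b(p,f)$ coincides with Herda's modular $V_p^H(f)$ for the induced two-variable exponent $p(y,x):=\bar p(Q)$ with $Q=(x,y)\in\Omega$. Once this identification is in place, $WBV_{p(\cdot)}[a;b]$ is literally the same normed space as Herda's $B_{p(x,y)}$, and the completeness assertion is a verbatim transfer of \cite[Theorem 6, Theorem 7, Theorem 8 and Theorem 9]{herda}. So the proof is essentially a reduction followed by a citation, and I would structure it that way.

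First I would verify that the reduction is legitimate, i.e.\ that the map $Q=[t_{k-1};t_k]\mapsto(t_k,t_{k-1})\in\Omega$ is well defined and that $p(t_{k-1},t_k)=\bar p(Q)\in[1;+\infty)$, so that the constructed two-variable exponent genuinely lands in the range $[1;\infty)$ required by Herda's framework. Since $p\ge 1$ pointwise, the harmonic-type mean $\bar p(Q)=\bigl(|Q|^{-1}\int_Q p(x)^{-1}\,dx\bigr)^{-1}$ satisfies $\bar p(Q)\ge 1$ as well, so this is immediate; I would state it in one line. With the exponents matched, the sums defining $V_p^H(f)$ and $V_a^b(p,f)$ agree term by term over every partition $P\in\Pi[a;b]$, hence the suprema agree, giving $V_p^H(f)=V_a^b(p,f)$ for all $f$. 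The two Luxemburg norms are then built from identical modulars, so the normed spaces coincide as sets and as normed spaces.

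The second and final step is to invoke Herda's theorems. Herda establishes modular completeness of $B_{p(x,y)}$ together with the equivalence (in the bounded-modular sense) between the modular and the Luxemburg norm, from which norm completeness of $(WBV_{p(\cdot)},\|\cdot\|_{WBV_{p(\cdot)}})$ follows directly. Because the author has already declared these results transferable without proof, I would simply cite \cite[Theorem 6, Theorem 7, Theorem 8 and Theorem 9]{herda} and conclude that $WBV_{p(\cdot)}$ is Banach.

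I expect the only real subtlety — the one point where a careful reader could object — to be whether Herda's hypotheses are actually met by an arbitrary two-variable exponent arising as $p(y,x)=\bar p(Q)$; in particular, Herda may implicitly assume finiteness, measurability, or some regularity of the two-variable exponent on $\Omega$, whereas here $p$ is merely Lebesgue integrable and possibly unbounded (cf.\ Remark \ref{rem_wbv_gansazgvrebaze_vb_mimartebashi}). The honest obstacle, therefore, is to confirm that Herda's completeness argument uses only $p(y,x)\in[1;\infty)$ pointwise and no upper bound on the exponent; if an upper bound were secretly needed, the unbounded case would require a separate treatment. Assuming, as the author evidently does, that Herda's proofs are insensitive to boundedness of the exponent, the argument is complete.
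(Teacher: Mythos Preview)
Your proposal is correct and follows exactly the paper's approach: the paper states this theorem without proof, deriving it from the identification $V_p^H(f)=V_a^b(p,f)$ via the induced two-variable exponent and then citing \cite[Theorem 6, Theorem 7, Theorem 8 and Theorem 9]{herda}. Your added verification that $\bar p(Q)\ge 1$ and your caveat about Herda's hypotheses in the unbounded case are reasonable elaborations, but the paper itself offers nothing beyond the citation.
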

\begin{thm}
If the exponent $p:[a;b]\to[1;+\infty)$ is such that $1<\inf \bar{p}(Q)\le \sup\bar{p}(Q)<+\infty$, then the space $WBV_{p(\cdot)}$ is uniformly convex and reflexive.
\end{thm}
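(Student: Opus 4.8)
The plan is to deduce the statement from Herda's theorems by exploiting the identification of modulars already established in the text. Recall that every interval $Q=[y,x]\subset[a;b]$ determines the point $(x,y)$ in the triangle $\Omega=\{(x,y)\:|\:a\le y<x\le b\}$, and that putting $p(y,x):=\bar{p}(Q)$ yields a two-variable exponent for which the Nakano modular of Herda coincides with our functional, $V_p^H(f)=V_a^b(p,f)$ for every admissible $f$. Hence the induced Luxemburg norms agree and the spaces $B_{p(x,y)}$ and $WBV_{p(\cdot)}$ are isometrically the same; it therefore suffices to verify that the hypotheses of Herda's uniform convexity and reflexivity theorems are met and to transfer the conclusions.

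First I would translate the hypothesis into a condition on the two-variable exponent. Since the correspondence $Q\leftrightarrow(x,y)$ is a bijection between subintervals of $[a;b]$ and points of $\Omega$, taking the infimum and supremum of $\bar{p}(Q)$ over all intervals $Q$ is the same as taking the infimum and supremum of $p(y,x)$ over $\Omega$. Thus the assumption $1<\inf\bar{p}(Q)\le\sup\bar{p}(Q)<+\infty$ says exactly that the associated exponent is bounded away from $1$ and bounded above on $\Omega$, which is precisely the regime covered by Herda's results.

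It then remains to invoke those results. Under the boundedness just described, Herda establishes uniform convexity and reflexivity of $B_{p(x,y)}$; since both properties are isometric invariants and the Luxemburg norms of the two spaces coincide, they pass without change to $WBV_{p(\cdot)}$, completing the argument. I do not anticipate a genuine obstacle, the proof being a reduction rather than a fresh estimate; the only point demanding care is the equality of the extremal values of $\bar{p}$ over subintervals with those of the two-variable exponent over $\Omega$, which guarantees that Herda's hypotheses hold verbatim.
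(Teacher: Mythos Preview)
Your proposal is correct and follows precisely the paper's own approach: the paper does not give a separate proof but simply notes that the identification $p(y,x)=\bar{p}(Q)$ yields $V_p^H(f)=V_a^b(p,f)$ and then cites Herda's Theorems 6--9. Your write-up makes explicit the one detail the paper leaves implicit, namely that $\inf_Q\bar{p}(Q)$ and $\sup_Q\bar{p}(Q)$ coincide with the infimum and supremum of the two-variable exponent over $\Omega$, so Herda's hypotheses are met verbatim.
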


\begin{rem}
Let us note that condition $1<\inf \bar{p}(Q)\le \sup\bar{p}(Q)<+\infty$ is equivalent to condition $1<\mathop{\mbox{essinf}} p(x)\le \mathop{\mbox{esssup}} p(x)<+\infty$.
\end{rem}

\par Further, we will show other properties of the functions from $WBV_{p(\cdot)}$ space which can't be derived from previous results.

\begin{thm}
\label{thm_wbv_funqciebis_shemosazgvruloba}
If $f\in WBV_{p(\cdot)}$, then $f$ is bounded.
\end{thm}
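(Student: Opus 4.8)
The plan is to read boundedness directly off the definition of the Luxemburg norm by testing the functional $V_a^b(p,\cdot)$ against the simplest possible partitions. Since $f\in WBV_{p(\cdot)}$, its norm is finite, so the set $S=\{\lambda>0\:|\:V_a^b(p,f/\lambda)\le1\}$ is nonempty; moreover $V_a^b(p,f/\lambda)$ is nonincreasing in $\lambda$ (each summand $|f(Q_k)/\lambda|^{\bar p(Q_k)}$ decreases as $\lambda$ grows, because $\bar p(Q_k)>0$), so $S$ is an interval with $\inf S=||f||_{WBV_{p(\cdot)}}$. Fix any $\lambda\in S$.

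First I would fix an arbitrary point $x\in(a,b)$ and consider the two-interval partition $P=\{[a,x],[x,b]\}$ of $[a;b]$. Because the supremum in (\ref{defn_wbv_space}) ranges over all partitions, the sum corresponding to this particular $P$, evaluated at $f/\lambda$, is at most $V_a^b(p,f/\lambda)\le1$. All summands are nonnegative, so retaining only the term for $[a,x]$ and using $f(a)=0$ yields $|f(x)/\lambda|^{\bar p([a,x])}\le1$. For the endpoint $x=b$ the single-interval partition $\{[a,b]\}$ gives the same conclusion with exponent $\bar p([a,b])$, while $x=a$ is trivial since $f(a)=0$.

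The crucial observation is that every exponent $\bar p(Q)$ is at least $1$: as $p\ge1$ we have $1/p\le1$, hence $\frac{1}{|Q|}\int_{Q}\frac{1}{p(x)}\,dx\le1$ and therefore $\bar p(Q)=\left(\frac{1}{|Q|}\int_{Q}\frac{1}{p(x)}\,dx\right)^{-1}\ge1$. Consequently, an inequality of the form $t^{\bar p([a,x])}\le1$ with $t=|f(x)/\lambda|\ge0$ and exponent $\ge1$ forces $t\le1$, that is $|f(x)|\le\lambda$. Since $x$ was arbitrary, $\sup_{x\in[a;b]}|f(x)|\le\lambda$; letting $\lambda$ run over $S$ and passing to the infimum even gives the quantitative estimate $||f||_\infty\le||f||_{WBV_{p(\cdot)}}$, so in particular $f$ is bounded.

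I expect no serious obstacle here. The only two points that genuinely need care are, first, the reduction from the full supremum to a single test partition, which is legitimate precisely because every partial sum over one partition is dominated by $V_a^b(p,f/\lambda)$; and second, the elementary but essential inequality $\bar p(Q)\ge1$, which is exactly what converts $t^{\bar p}\le1$ into $t\le1$ and would fail for exponents below $1$. Note also that the argument uses nothing about boundedness of the exponent, in accordance with Remark \ref{rem_wbv_gansazgvrebaze_vb_mimartebashi}, since it only relies on $\bar p\ge1$.
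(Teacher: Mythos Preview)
Your proof is correct and takes a more direct route than the paper's. The paper argues by contradiction: assuming $f$ is unbounded (WLOG near $b$), it picks a sequence $x_n\uparrow b$ with $f(x_n)\to+\infty$ and observes that the single term $|f(x_n)-f(x)|^{\bar p([x;x_n])}$ already tends to $+\infty$ because $\bar p\ge1$, forcing $V_a^b(p,f)=+\infty$. You instead work forward from the Luxemburg norm: fixing any $\lambda$ with $V_a^b(p,f/\lambda)\le1$ and testing against the two-interval partition $\{[a;x],[x;b]\}$ immediately yields $|f(x)|\le\lambda$, and passing to the infimum over admissible $\lambda$ gives the quantitative embedding $\|f\|_\infty\le\|f\|_{WBV_{p(\cdot)}}$, which is sharper than the paper's bare boundedness statement and sidesteps any case analysis on where the blow-up might occur. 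One small inaccuracy in your closing commentary, not affecting the argument itself: for $t\ge0$ the implication $t^{r}\le1\Rightarrow t\le1$ holds for \emph{any} positive exponent $r$, not only for $r\ge1$, so while $\bar p(Q)\ge1$ is true and worth recording, it is not really the hinge of that particular step.
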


\begin{proof}
Suppose the opposite and without of loss of generality assume that
$$
\limsup_{x\to b-}f(x)=+\infty
$$
and $f(x)<+\infty$ for some $x\in[a;b)$ (at least $f(a)=0$). Then there exists a sequence $\{x_n\}_{n\in\mathbb{N}}$ such that $x_n>x$ for all $n\in\mathbb{N}$, $x_{n}\uparrow b$ and $f(x_{n})\to+\infty$, $n\to+\infty$. Since $\bar{p}(Q)\ge1$ for all intervals $Q\subset[a;b]$ we obtain
$$
V_a^b(p,f)\ge|f(x_{n})-f(x)|^{\bar{p}([x;x_{n}])}\to+\infty, \:\: n\to+\infty.
$$
We get a contradiction $V_a^b(p,f)=+\infty$.
\end{proof}

\par Next we will investigate the relationship between $BV^{p(\cdot)}$ and $WBV_{p(\cdot)}$ spaces. Let start with the following

\begin{lem}
\label{lemma_sashualoze_didi_da_patara_mnishvnelobebi}
Given Lebesgue integrable function $f$, then there exist $x,y\in[a;b]$ such that
$$
f(x)\le \frac{1}{b-a}\int_{a}^{b}f(t)dt\le f(y).
$$
\end{lem}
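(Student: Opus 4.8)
The plan is to set $A:=\frac{1}{b-a}\int_a^b f(t)\,dt$ and to produce the two required points separately, each by a contradiction argument. The essential feature of the situation is that $f$ is merely Lebesgue integrable, so no continuity or intermediate value theorem is available; the only leverage is the relationship between the pointwise sign of a function and the sign of its integral.

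First I would seek a point $x$ with $f(x)\le A$. Suppose, to the contrary, that $f(t)>A$ for \emph{every} $t\in[a;b]$, and set $g:=f-A$, which is then strictly positive at every point. The main step is to show that such a $g$ must have strictly positive integral. To this end I would consider the increasing family of super-level sets $E_n:=\{t\in[a;b]\:|\:g(t)>1/n\}$. Since $g(t)>0$ at every point, these sets exhaust $[a;b]$, so by continuity of Lebesgue measure from below $|E_n|\to b-a>0$; hence $|E_{n_0}|>0$ for some $n_0$, and consequently
$$
\int_a^b g(t)\,dt\ge \frac{1}{n_0}\,|E_{n_0}|>0.
$$
But $\int_a^b g=\int_a^b f-A(b-a)=0$ by the choice of $A$, a contradiction. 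Therefore some $x\in[a;b]$ satisfies $f(x)\le A$.

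For the point $y$ I would run the identical argument on $-f$ (equivalently, assume $f(t)<A$ for all $t$ and apply the same estimate to $A-f$ in place of $f-A$), obtaining $y\in[a;b]$ with $f(y)\ge A$. Combining the two gives $f(x)\le A\le f(y)$, which is exactly the claim.

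The routine portions are the algebraic identity $\int_a^b g=\int_a^b f-A(b-a)$ and the symmetric second half. The one place that needs genuine care, and the main obstacle, is the passage from \emph{strictly positive at every point} to \emph{strictly positive integral}. One cannot simply invoke "the integral of a positive function is positive", since that is false for functions that are positive only on a null set, and the two situations must not be conflated. It is precisely the pointwise strictness \emph{everywhere} that allows one to extract a super-level set $E_{n_0}$ of positive measure via continuity of measure; once that measure-theoretic fact is secured, the contradiction, and hence the whole lemma, follows at once.
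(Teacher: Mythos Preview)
Your proof is correct and follows essentially the same contradiction strategy as the paper: assume the desired inequality fails at every point, then compare with the integral of $f$ to reach a contradiction. The only difference is that the paper passes directly from the pointwise strict inequality $f(y)<\mu$ to the integrated strict inequality $\int_a^b f<\mu(b-a)$, whereas you supply the missing justification via the super-level sets $E_n$; in this respect your write-up is actually more careful than the original.
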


\begin{proof} If $f$ is almost everywhere equal to the constant then this inequality will hold for almost all points $x,y\in[a;b]$. Assume that $f$ is not constant, we will show only the right-hand side inequality, the left-hand side inequality will be obtained analogously. Suppose the opposite, let for all $y\in (a;b)$ we have
$$
f(y)<\frac{1}{b-a}\int_{a}^{b}f(t)dt=\mu.
$$
We get that $f(y)<\mu$ for all points $y\in(a;b)$, if we integrate both sides of the inequality we get
$$
\int_{a}^{b}f(y)dy<\mu(b-a)=\int_a^bf(t)dt,
$$
which is a contradiction.
\end{proof}

\begin{thm}
\label{thm_chadgma_BV_WBV_shi}
Suppose, $p:[a;b]\to[1;+\infty)$ is bounded, then $BV^{p(\cdot)}\subset WBV_{p(\cdot)}$.
\end{thm}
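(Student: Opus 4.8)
The plan is to reduce this set-theoretic inclusion to a single modular domination,
$$
V_a^b(p,g)\le V_{[a;b]}^{p(\cdot)}(g),
$$
valid for every $g:[a;b]\to\mathbb{R}$ with $g(a)=0$, and then pass to Luxemburg norms. Granting this inequality, suppose $f\in BV^{p(\cdot)}$ with $\|f\|_{BV^{p(\cdot)}}=\lambda_0<+\infty$. For every $\mu>\lambda_0$ one has $V_{[a;b]}^{p(\cdot)}(f/\mu)\le1$, because the modular decreases when its argument is shrunk, each summand $|g(Q_k)|^{p(x_k)}$ being nondecreasing in $|g(Q_k)|$. The modular inequality then gives $V_a^b(p,f/\mu)\le1$, so $\|f\|_{WBV_{p(\cdot)}}\le\mu$; letting $\mu\downarrow\lambda_0$ yields $\|f\|_{WBV_{p(\cdot)}}\le\|f\|_{BV^{p(\cdot)}}<+\infty$, which delivers both $f\in WBV_{p(\cdot)}$ and a norm-one continuous embedding.

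The heart of the matter is the modular inequality, and the idea is to dominate, interval by interval, the $WBV$-exponent $\bar{p}(Q_k)$ by a pointwise exponent $p(x_k)$ at a suitably chosen tag. Fix an untagged partition $P=\{Q_k\}\in\Pi[a;b]$ and apply Lemma \ref{lemma_sashualoze_didi_da_patara_mnishvnelobebi} to the function $1/p$, which is integrable since $p\ge1$. On each $Q_k$ the lemma furnishes both a point where $1/p$ lies below its mean $1/\bar{p}(Q_k)$ and a point where it lies above; equivalently, there is $x_k^{-}\in Q_k$ with $p(x_k^{-})\le\bar{p}(Q_k)$ and $x_k^{+}\in Q_k$ with $p(x_k^{+})\ge\bar{p}(Q_k)$.

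Next I would split the terms according to the size of the base. Writing $c_k=|g(Q_k)|$, recall that $t\mapsto c_k^{\,t}$ is nondecreasing when $c_k\ge1$ and nonincreasing when $0\le c_k<1$. Hence the tag
$$
x_k=\begin{cases}x_k^{+},&c_k\ge1,\\ x_k^{-},&c_k<1,\end{cases}
$$
guarantees $c_k^{\,\bar{p}(Q_k)}\le c_k^{\,p(x_k)}$ in every case, the case $c_k=0$ being trivial. Summing over $k$, the tagged partition $P^{*}\in\Pi^{*}[a;b]$ carrying these tags satisfies
$$
\sum_k|g(Q_k)|^{\bar{p}(Q_k)}\le\sum_k|g(Q_k)|^{p(x_k)}\le V_{[a;b]}^{p(\cdot)}(g),
$$
and taking the supremum over all $P\in\Pi[a;b]$ proves $V_a^b(p,g)\le V_{[a;b]}^{p(\cdot)}(g)$.

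I expect the only genuine subtlety to be precisely this case split: since the harmonic mean $\bar{p}(Q_k)$ may land on either side of the pointwise values of $p$ on $Q_k$, a one-sided comparison is hopeless, and it is essential that Lemma \ref{lemma_sashualoze_didi_da_patara_mnishvnelobebi} supplies tags on \emph{both} sides so that the correct one can be selected according to whether $|g(Q_k)|$ exceeds $1$. The boundedness of $p$ plays no role in the estimate itself; it enters only to ensure that $BV^{p(\cdot)}$ is the space studied in \cite{castilo-merentes-rafeiro}.
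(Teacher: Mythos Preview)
Your proof is correct and follows essentially the same route as the paper: invoke Lemma~\ref{lemma_sashualoze_didi_da_patara_mnishvnelobebi} to produce tags in each $Q_k$ on both sides of $\bar p(Q_k)$, then select the tag according to whether $|g(Q_k)|\ge 1$ or $<1$ so that $|g(Q_k)|^{\bar p(Q_k)}\le |g(Q_k)|^{p(x_k)}$, and pass to the supremum. Your presentation is in fact slightly more careful than the paper's in two respects: you explicitly apply the lemma to $1/p$ (which is what is needed, since $\bar p(Q_k)$ is the harmonic rather than arithmetic mean of $p$), and you argue directly at the level of Luxemburg norms rather than modulars, which yields the continuous embedding $\|f\|_{WBV_{p(\cdot)}}\le\|f\|_{BV^{p(\cdot)}}$ and sidesteps the implicit use of boundedness of $p$ to pass between finite norm and finite modular.
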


\begin{proof}
Consider the partition $P$ of $[a;b]$ segment. If $p$ is a constant in the $Q_k$ interval, then $\bar{p}(Q_{k})=p(x_k)$ for all points $x_k\in Q_{k}$. If $p$ function is not constant on $Q_k$, then by Lemma \ref{lemma_sashualoze_didi_da_patara_mnishvnelobebi} there are $x_k^1, x_k^2\in Q_{k}$ points such that $p(x_k^1)\le\bar{p}(Q_{k})\le p(x_k^2)$. For the same partition $P$, consider the sum 
$$
\sum|f(Q_{k})|^{\bar{p}(Q_k)}\le\sum|f(Q_{k})|^{p(x_k^i)},
$$
where 
$$
x_{k-1}^i=
\begin{cases}
x_k^1, & \mbox{if } |f(Q_{k})|<1; \\
x_k^2, & \mbox{if } |f(Q_{k})|\ge 1. 
\end{cases}
$$
Since the partition was chosen arbitrarily we get 
$$
V_a^b(p,f)\le\sup_{\Pi^*[a;b]}\sum|f(Q_{k})|^{p(x_{k-1}^i)}.
$$
Thus, if $f\in BV_{p(\cdot)}$, then $V_a^b(f)<+\infty$ which means that $f\in WBV_{p(\cdot)}$.
\end{proof}

\begin{cor}
Let $p:[a;b]\to[1;+\infty)$ be a bounded function. Then $WBV_{p(\cdot)}$ has a sub-space isomorphic to $c_0$.
\end{cor}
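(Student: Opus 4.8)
The plan is to produce a sequence $(f_n)_{n\ge 1}$ in $WBV_{p(\cdot)}$ that is equivalent to the canonical basis of $c_0$; its closed linear span is then the required subspace. Thus I would aim for functions with $\|f_n\|_{WBV_{p(\cdot)}}$ comparable to $1$ for which there is a constant $C\ge 1$ such that
$$
\max_{1\le n\le N}|a_n|\ \le\ \left\|\sum_{n=1}^{N}a_nf_n\right\|_{WBV_{p(\cdot)}}\ \le\ C\max_{1\le n\le N}|a_n|
$$
holds for every $N$ and all scalars $a_1,\dots,a_N$. By the usual criterion for the presence of $c_0$ in a Banach space, such a two-sided estimate forces the closed linear span of $\{f_n\}$ to be isomorphic to $c_0$.

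For the construction I would fix pairwise disjoint closed subintervals $I_n=[\alpha_n,\beta_n]\subset[a;b]$ and take $f_n$ to be a single spike supported on $I_n$ --- a tent of height one, or a single jump --- set equal to $0$ outside $I_n$ and normalised so that $f_n(a)=0$. The advantage of disjoint, compactly supported spikes is that $f=\sum_n a_nf_n$ vanishes on every gap between the intervals $I_n$. Hence, for any partition in $\Pi[a;b]$, a partition interval straddling two spikes may be subdivided at a point where $f=0$ without decreasing the corresponding term, because $\bar p(Q)\ge 1$; consequently the defining sum decouples over the blocks and
$$
V_a^b\!\left(p,\tfrac1\lambda f\right)\ \approx\ \sum_{n=1}^{N}\left(\frac{|a_n|}{\lambda}\right)^{\bar p(I_n)},
$$
up to the fixed factor produced by the two monotone pieces of each spike. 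The lower estimate is then immediate: the full modular dominates any single term, and since every exponent satisfies $\bar p(I_n)\ge 1$, the inequality $V_a^b(p,f/\lambda)\le 1$ already forces $|a_n|/\lambda\le 1$ for each $n$, whence $\lambda\ge\max_n|a_n|$ and $\|f\|_{WBV_{p(\cdot)}}\ge\max_n|a_n|$.

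The real content, and the step I expect to be the main obstacle, is the uniform upper estimate, i.e.\ showing that the blocks do not accumulate: one needs $\sum_n(|a_n|/\lambda)^{\bar p(I_n)}\le 1$ already for $\lambda$ equal to a fixed multiple of $\max_n|a_n|$, uniformly in $N$. This is exactly the point that distinguishes genuine $c_0$-behaviour from $\ell^{q}$-behaviour. It can be secured by choosing the supporting intervals $I_n$ so that the averaged exponents $\bar p(I_n)$ grow without bound: then, with $\lambda=c\max_n|a_n|$ for a fixed $c>1$, one gets $\sum_n(|a_n|/\lambda)^{\bar p(I_n)}\le\sum_n c^{-\bar p(I_n)}$, a convergent series that can be made $\le 1$ after enlarging $c$, so that $\|f\|_{WBV_{p(\cdot)}}\le C\max_n|a_n|$ uniformly in $N$. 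I therefore anticipate that the crux is the simultaneous arrangement of the intervals $I_n$ realising $\bar p(I_n)\to\infty$ and the careful justification that the modular truly decouples over the disjoint supports; once both are in hand, the $c_0$-equivalence and the identification of the closed linear span with $c_0$ follow from the criterion stated at the outset.
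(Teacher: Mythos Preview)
Your plan breaks down at exactly the step you single out as ``the real content''. You propose to obtain the uniform upper estimate by arranging the supporting intervals so that the averaged exponents $\bar p(I_n)$ grow without bound. But the hypothesis of the corollary is that $p$ is \emph{bounded}: if $\sup_{x\in[a;b]}p(x)=M<\infty$ then $1/p(x)\ge 1/M$ everywhere, hence
\[
\bar p(Q)=\left(\frac{1}{|Q|}\int_Q\frac{dx}{p(x)}\right)^{-1}\le M
\]
for \emph{every} interval $Q\subset[a;b]$. Thus the numbers $\bar p(I_n)$ are trapped in $[1,M]$ and cannot tend to infinity. Your controlling series $\sum_n c^{-\bar p(I_n)}$ then has general term bounded below by $c^{-M}>0$ and diverges for every $c>1$, so the inequality $\big\|\sum_{n\le N}a_nf_n\big\|_{WBV_{p(\cdot)}}\le C\max_n|a_n|$ with $C$ independent of $N$ cannot be obtained this way. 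In fact, with exponents confined to $[p_-,p_+]\subset[1,\infty)$, disjointly supported height-one spikes produce a basic sequence whose span is isomorphic to an $\ell^q$-space (with $q$ between $p_-$ and $p_+$), not to $c_0$; the two-sided $c_0$-estimate you are aiming for is simply false for this family. A secondary issue is the decoupling step: subdividing a partition interval at a zero of $f$ can \emph{decrease} the corresponding term (take endpoints where $f$ has opposite signs and note $|f(t)-f(s)|^{\bar p(Q)}$ may exceed $|f(s)|^{\bar p}+|f(t)|^{\bar p}$ once $\bar p>1$), so ``without decreasing the corresponding term'' is not justified and your upper bound for the modular by a decoupled sum is not established.

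For comparison, the paper does not give a construction at all: its proof is a bare reference to \cite[Corollary~3.3]{castilo-merentes-rafeiro}, i.e.\ the argument already carried out for $BV^{p(\cdot)}$ is asserted to apply here verbatim (the preceding Theorem~\ref{thm_chadgma_BV_WBV_shi} supplies the link between the two settings). So the approaches are not merely different---yours, as written, requires the \emph{opposite} hypothesis (unbounded $p$) to the one in force, and a genuinely new mechanism would be needed to make a direct basic-sequence argument work under bounded exponents.
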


\begin{proof}
See \cite[Corollary 3.3.]{castilo-merentes-rafeiro}.
\end{proof}

Naturally, there raises a question: Does the inclusion $WBV_{p(\cdot)}\subset BV^{p(\cdot)}$ hold for all bounded exponents? The answer to this question is negative.

\begin{thm}
\label{thm_anti_chadgma}
There exists a bounded exponent for which $WBV_{p(\cdot)}\backslash BV^{p(\cdot)}\ne\emptyset$.
\end{thm}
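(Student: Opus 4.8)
The plan is to exploit precisely the instability of $BV^{p(\cdot)}$ under modifications of the exponent on a null set, which is the motivation stated in the introduction. I would fix $[a;b]=[0;1]$ and take the exponent $p$ equal to the constant $2$ at almost every point, but altered at a countable set of points to values approaching $1$. Concretely, let $\{[a_n;b_n]\}_{n\ge1}$ be pairwise disjoint closed subintervals of $(0;1)$ ordered from left to right, and set $p(a_n)=q_n$ for each $n$, while $p(x)=2$ for every $x\notin\{a_n\}$, where $q_n\in(1;2]$ and $q_n\downarrow1$. Since $\{a_n\}$ has measure zero we get $\int_Q\frac1p=\frac{|Q|}{2}$ for every interval $Q$, hence $\bar{p}(Q)=2$ for all $Q\subset[0;1]$, and therefore $V_a^b(p,g)=\sup_{\Pi[0;1]}\sum|g(Q_k)|^2$ for every $g$, so that $WBV_{p(\cdot)}$ reduces to the classical Wiener space of functions of bounded quadratic variation. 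At the same time $p$ is bounded with values in $(1;2]$, so it is an admissible exponent for $BV^{p(\cdot)}$, and the tagged partitions defining $V_{[0;1]}^{p(\cdot)}$ are free to place a tag at any $a_n$ and thereby use the small exponent $q_n$.

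Next I would produce an oscillating witness function
\[
f=\sum_{n\ge1}\frac1n\,\chi_{[a_n;b_n]},
\]
so that $f(0)=0$, $f$ is bounded, and $f$ makes an up-jump of height $1/n$ at each $a_n$ and a down-jump of the same height at $b_n$. The whole construction rests on a gap between three sums attached to $f$: its quadratic variation is controlled by $\sum_n(1/n)^2<+\infty$, its ordinary variation equals $\sum_n 2/n=+\infty$, and the quantity governing membership in $BV^{p(\cdot)}$ is $\sum_n(1/n)^{q_n}$, which we are free to force to diverge by letting $q_n\to1$ at a suitable rate. A convenient choice is $q_n=1+1/\ln(n+2)$, for then $(1/n)^{q_n}=\tfrac1n\,e^{-\ln n/\ln(n+2)}\sim\tfrac{1}{e\,n}$, whence $\sum_n(1/n)^{q_n}=+\infty$, while still $q_n\in(1;2]$ and $q_n\to1$.

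To see $f\in WBV_{p(\cdot)}$ I would bound $V_2(f)=\sup_{\Pi}\sum|f(Q_k)|^2$ above by a constant multiple of $\sum_n(1/n)^2$. Reading the values of $f$ at the partition points as a sequence in $\{0\}\cup\{1/n\}$, the disjointness and the left-to-right ordering of the bumps force the ``current bump index'' to be nondecreasing, so every nonzero increment is an ``entering'' or a ``leaving'' transition of a single bump (possibly two neighbouring transitions merged across a skipped gap, which only decreases the contribution); hence each bump contributes at most $2(1/n)^2$ uniformly over all partitions and $V_2(f)\le 2\sum_n(1/n)^2<+\infty$, giving $\|f\|_{WBV_{p(\cdot)}}<+\infty$. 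To see $f\notin BV^{p(\cdot)}$, I would fix $\lambda>0$ and, for each $N$, take a tagged partition whose first $N$ active intervals each straddle one up-jump, the $n$-th running from a point of the gap just before $[a_n;b_n]$ (where $f=0$) to a point of $[a_n;b_n)$ (where $f=1/n$) and carrying the tag $x=a_n$; these intervals are pairwise disjoint, so $\sum_{n\le N}|1/(\lambda n)|^{q_n}\le V_{[0;1]}^{p(\cdot)}(f/\lambda)$, and since $\lambda^{q_n}$ is bounded in $n$, letting $N\to\infty$ yields $V_{[0;1]}^{p(\cdot)}(f/\lambda)\ge C(\lambda)^{-1}\sum_n(1/n)^{q_n}=+\infty$ for every $\lambda>0$, hence $\|f\|_{BV^{p(\cdot)}}=+\infty$ and $f\in WBV_{p(\cdot)}\setminus BV^{p(\cdot)}$. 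The main obstacle is the uniform upper bound $V_2(f)<+\infty$: one must rule out partitions that could accumulate bump contributions, and it is exactly the disjointness and the ordering of the supports that make this bound hold for all partitions at once.
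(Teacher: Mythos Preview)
Your argument is correct and follows exactly the same strategy as the paper: alter a constant exponent on a countable (hence null) set so that every $\bar p(Q)$ stays equal to the constant and $WBV_{p(\cdot)}$ reduces to a classical Wiener $p$-variation class, while the tagged partitions defining $BV^{p(\cdot)}$ can land on the modified points and pick up the smaller exponents. The paper's realization is simply leaner than yours: it takes $p=4$ a.e.\ with $p(1/n)=2$ and the spike function $f(x)=\sqrt{x}\,\chi_{\{1/n:\,n\in\mathbb N\}}(x)$, so the $BV^{p(\cdot)}$ lower bound becomes the harmonic series $\sum 1/k$ and the $WBV_{p(\cdot)}$ upper bound is $2\sum k^{-2}=\pi^2/3$, with no asymptotics and no partition-uniform estimate needed. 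Your version with disjoint bumps and $q_n=1+1/\ln(n+2)$ works equally well but costs two extra steps (the bound $V_2\!\bigl(\sum n^{-1}\chi_{[a_n,b_n]}\bigr)\le C\sum n^{-2}$ uniformly over partitions, and the check $(1/n)^{q_n}\sim (en)^{-1}$); the payoff is only that you keep the background exponent at $2$ rather than $4$, which is not needed for the statement.
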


\begin{proof}
Consider [0;1] segment, the set $A:=\left\{1/n\:|\: n\in\mathbb{N}\right\}$, the exponent function $p(x)=4-2\chi_{A}(x)$ and
$$
f(x)=\sqrt{x}\cdot\chi_A(x).
$$
It is clear that $f(0)=0$. Let's consider intervals $\left[\frac{1}{k};\frac{2k-1}{2k^2-2k}\right]$, $k\in\{2,3,...\}$, it is obvious that for any natural number $N>2$ we have
\begin{align*}
&
\sup_{\Pi^*[a;b]}\sum|f(Q_{k})|^{p(x_{k-1})}\ge
\sum_{k=2}^{N}\left|f\left(\left[\frac{1}{k};\frac{2k-1}{2k^2-2k}\right]\right)\right|^{p\left(1/k\right)}
\\&\qquad
=\sum_{k=2}^{N}\frac{1}{k}\to+\infty,\quad N\to+\infty.
\end{align*}
We obtain that $f\notin BV^{p(\cdot)}$. On the other hand since $|A|=0$, for arbitrary partition $P$, we have $\bar{p}(Q)=4$ for all $Q\in P$. Then
$$
\sup_{\Pi[a;b]}\sum|f(Q_{k})|^{\bar{p}(Q_{k})}\le2\sum_{k=1}^{+\infty}\left(\frac{1}{\sqrt{k}}\right)^4=\frac{\pi^2}{3}.
$$
Thus $f\in WBV_{p(\cdot)}\backslash BV^{p(\cdot)}$.
\end{proof}

\begin{thm}
The space $WBV_{p(\cdot)}$ is not separable.
\end{thm}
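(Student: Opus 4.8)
The plan is to establish non-separability by exhibiting an uncountable family of functions in $WBV_{p(\cdot)}$ that is uniformly separated in norm; the existence of such a family is incompatible with separability. For each $t\in(a;b)$ I would take the single-jump function $f_t:=\chi_{[t;b]}$, which satisfies $f_t(a)=0$. Isolating the jump at $t$ by a partition point shows $V_a^b(p,f_t)=1$, since all other partition intervals contribute $0$ and the interval carrying the jump contributes $1^{\bar p(Q)}=1$. Hence $\norm{f_t}_{WBV_{p(\cdot)}}\le 1$ and $f_t\in WBV_{p(\cdot)}$ for every exponent $p$, bounded or not.

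The key step is a uniform lower bound $\norm{f_t-f_s}_{WBV_{p(\cdot)}}\ge 1$ whenever $t\ne s$. For $t<s$ the difference $g:=f_t-f_s=\chi_{[t;s)}$ has a jump of height $1$ at $t$, so I would pick an interval $Q=[t';t'']$ with $t'<t\le t''<s$, giving $\abs{g(Q)}=1$. For any $\lambda<1$ one then has $\abs{(g/\lambda)(Q)}=1/\lambda>1$, and since $\bar p(Q)\ge 1$ for every interval $Q\subset[a;b]$, it follows that $(1/\lambda)^{\bar p(Q)}>1$; therefore $V_a^b(p,g/\lambda)\ge(1/\lambda)^{\bar p(Q)}>1$. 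Thus no $\lambda<1$ can satisfy $V_a^b(p,g/\lambda)\le 1$, which by the definition of the Luxemburg norm forces $\norm{g}_{WBV_{p(\cdot)}}\ge 1$. The observation that makes this work for arbitrary, even unbounded, exponents is that a jump of height exactly $1$ is exponent-insensitive, because $1^{\bar p(Q)}=1$, whereas heights exceeding $1$ are strictly amplified by the exponent.

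Finally I would convert the separation into non-separability by a standard packing argument. The open balls $\set{B(f_t,1/2)}_{t\in(a;b)}$ are pairwise disjoint: if some $h$ lay in $B(f_t,1/2)\cap B(f_s,1/2)$ with $t\ne s$, the triangle inequality would yield $\norm{f_t-f_s}_{WBV_{p(\cdot)}}<1$, contradicting the bound above. Since $(a;b)$ is uncountable, this produces uncountably many pairwise disjoint nonempty open balls, so any dense subset must meet each ball in a distinct point and hence cannot be countable; therefore $WBV_{p(\cdot)}$ is not separable. I expect the only delicate point to be the passage from the modular estimate to the norm bound in the second paragraph, as it is there that the unboundedness of the exponent could in principle cause trouble; everything else is routine.
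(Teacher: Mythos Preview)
Your proof is correct and follows essentially the same approach as the paper: both exhibit an uncountable family of single-jump functions that is $1$-separated in the Luxemburg norm and then apply the standard disjoint-balls packing argument. The only cosmetic difference is the choice of family---the paper uses $\chi_{\{x\}}$ for $x\in(a;b]$ while you use $\chi_{[t;b]}$ for $t\in(a;b)$---and your passage from the modular estimate to the norm bound via the definition of the Luxemburg norm is spelled out more explicitly than in the paper.
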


\begin{proof}
Consider the set of the functions $A:=\{\chi_{\{x\}}\:|\:x\in(a;b]\}$. It is clear that $A\subset WBV_{p(\cdot)}$ and $||f(\cdot)-g(\cdot)||_{WBV_{p(\cdot)}}\ge1$, for all $f\ne g$, $f,g\in A$. This implies that $B(f,1/2)\cap B(g,1/2)=\emptyset$, for all $f\ne g$, $f,g\in A$, where $B(f,1/2)$ denotes the open ball centered in $f$ and with a radius of 1/2. Now, since every dense subset of $WBV_{p(\cdot)}$ must have at least a point inside each $B(f,1/2)$, $f\in A$, and the set $A$ has the power of the continuum, therefore there is no countable set that is dense in $WBV_{p(\cdot)}$.
\end{proof}

\begin{thm}
\par Let given functions $p_1,p_2:[a;b]\to[1;+\infty)$ and $p_1(x)\le p_2(x)$ a.e. $x\in[a;b]$, then
$WBV_{p_1(\cdot)}\hookrightarrow WBV_{p_2(\cdot)}$.
\end{thm}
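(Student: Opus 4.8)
The plan is to reduce the embedding to a pointwise comparison of the averaged exponents and then to overcome the failure of monotonicity of $t\mapsto t^s$ by a scaling argument that rests on the boundedness of $WBV$-functions established in Theorem \ref{thm_wbv_funqciebis_shemosazgvruloba}. The first, purely elementary, step is to observe that $p_1(x)\le p_2(x)$ a.e.\ yields $\bar{p}_1(Q)\le\bar{p}_2(Q)$ for every interval $Q\subset[a;b]$: indeed $1/p_1(x)\ge 1/p_2(x)$ a.e., hence $\frac{1}{|Q|}\int_Q \frac{dx}{p_1(x)}\ge \frac{1}{|Q|}\int_Q\frac{dx}{p_2(x)}$, and passing to reciprocals reverses the inequality.

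Next I would control the sup-norm by the $WBV_{p_1(\cdot)}$-norm. Since $f(a)=0$, for any $x\in(a;b]$ the partition having $[a;x]$ as one of its intervals gives $V_a^b(p_1,f/\lambda)\ge |f(x)/\lambda|^{\bar{p}_1([a;x])}$; whenever $\lambda>\|f\|_{WBV_{p_1(\cdot)}}$ the modular is $\le 1$, and because the exponent is $\ge 1$ this forces $|f(x)|\le\lambda$. Letting $\lambda$ decrease to the norm gives $\|f\|_\infty\le\|f\|_{WBV_{p_1(\cdot)}}$ (this refines the qualitative boundedness of Theorem \ref{thm_wbv_funqciebis_shemosazgvruloba} into a quantitative bound).

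Now comes the main step. Given $f\in WBV_{p_1(\cdot)}$, I would take $\lambda\ge\max\{\|f\|_{WBV_{p_1(\cdot)}},\,2\|f\|_\infty\}$ with $\lambda>0$. For such $\lambda$ one has, for every partition, $|f(Q_k)/\lambda|\le 2\|f\|_\infty/\lambda\le 1$, so all bases lie in $[0;1]$; and there the elementary inequality $t^{b}\le t^{a}$ for $0\le t\le 1$, $a\le b$, combined with $\bar{p}_1(Q_k)\le\bar{p}_2(Q_k)$, allows a termwise comparison:
$$
\sum|f(Q_k)/\lambda|^{\bar{p}_2(Q_k)}\le\sum|f(Q_k)/\lambda|^{\bar{p}_1(Q_k)}.
$$
Taking the supremum over $\Pi[a;b]$ gives $V_a^b(p_2,f/\lambda)\le V_a^b(p_1,f/\lambda)\le 1$, hence $\|f\|_{WBV_{p_2(\cdot)}}\le\lambda$. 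Using $\|f\|_\infty\le\|f\|_{WBV_{p_1(\cdot)}}$ and letting $\lambda$ tend to $2\|f\|_{WBV_{p_1(\cdot)}}$ yields $\|f\|_{WBV_{p_2(\cdot)}}\le 2\|f\|_{WBV_{p_1(\cdot)}}$, which is the asserted continuous embedding.

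The only genuinely nontrivial point is the non-monotonicity of $s\mapsto t^s$: for $t>1$ a larger exponent \emph{increases} the term, so the two modulars cannot be compared termwise in general, and a naive inequality $V_a^b(p_2,f)\le V_a^b(p_1,f)$ is false. The remedy is precisely the normalization by a large $\lambda$, which drives every increment into the region $[0;1]$ where monotonicity works in our favour; this is exactly where the boundedness of functions in $WBV_{p_1(\cdot)}$ is indispensable.
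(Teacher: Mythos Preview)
Your proof is correct and follows the same overall strategy as the paper: establish $\bar{p}_1(Q)\le\bar{p}_2(Q)$, normalize so that every increment lies in $[0,1]$, and then compare termwise. The only difference is in how the normalization is carried out. The paper simply takes $f$ with $\|f\|_{WBV_{p_1(\cdot)}}=1$ and notes that then $V_a^b(p_1,f)\le 1$; since every single term $|f(Q)|^{\bar{p}_1(Q)}$ is already dominated by the modular, this forces $|f(Q)|\le 1$ for \emph{every} interval $Q$ directly, with no detour through $\|f\|_\infty$ and no factor $2$. This yields the embedding with constant $1$. So your closing remark that ``a naive inequality $V_a^b(p_2,f)\le V_a^b(p_1,f)$ is false'' somewhat overstates the difficulty: once the $p_1$-modular is at most $1$, that inequality \emph{is} true termwise, and this is precisely what the paper exploits.
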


\begin{proof}
Since $p_1(x)\le p_2(x)$ we get 
$$
\frac{1}{p_1(x)}\ge\frac{1}{p_2(x)},
$$ 
now by monotonicity of the integral for any interval $Q$ we obtain 
$$
\frac{1}{|Q|}\int_{Q}\frac{1}{p_1(x)}dx\ge\frac{1}{|Q|}\int_{Q}\frac{1}{p_2(x)}dx.
$$ 
Finally $\bar{p}_1(Q)\le\bar{p}_2(Q)$.

\par Consider the function $f$ such that $\left\|f\right\|_{WBV_{p_1(\cdot)}}=1$, then for all intervals $Q\subset[a;b]$ we have $|f(Q)|\le 1$ and by inequality $\bar{p}_1(Q)\le\bar{p}_2(Q)$ we get
$$
\sum|f(Q)|^{\bar{p}_2(Q)}\le\sum|f(Q)|^{\bar{p}_1(Q)}.
$$
If we write supremum on both sides of the inequality we will conclude that
$$
V_a^b(p_2,f)\le V_a^b(p_1,f).
$$
Which implies that 
$$
||f||_{WBV_{p_2(\cdot)}}\le||f||_{WBV_{p_1(\cdot)}}.
$$
\end{proof}

\par The next two theorems concern the existence of one-sided limits at each point of a function from $WBV_{p(\cdot)}$ and how the existence of one-sided limits depends on the exponent $p$.

\begin{thm}
\label{thm_calmxrivi_zgvrebis_arseboba}
Let $p:[a;b]\to[1;+\infty)$ be essentially bounded function and $f\in WBV_{p(\cdot)}$, then at each point of  the domain of function $f$ there exist left-hand and right-hand limits $($at the end points of the interval we consider only $f(a+)$ and $f(b-)$$)$.
\end{thm}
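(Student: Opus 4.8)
The plan is to argue by contradiction through the Cauchy criterion for one-sided limits, the decisive point being that essential boundedness of $p$ forces the averaged exponents $\bar{p}(Q)$ to stay uniformly bounded. First I would record the consequence of the hypothesis: set $\beta:=\mathop{\mbox{esssup}} p<+\infty$, so that $p(x)\le\beta$ and hence $1/p(x)\ge 1/\beta$ for a.e.\ $x$; then for every interval $Q\subset[a;b]$ we get $\frac{1}{|Q|}\int_Q\frac{1}{p}\ge\frac{1}{\beta}$, and therefore $1\le\bar{p}(Q)\le\beta$. Next, since $f\in WBV_{p(\cdot)}$ there is $\lambda>0$ with $V_a^b(p,f/\lambda)\le1$; as replacing $f$ by $f/\lambda$ changes neither the existence nor the location of one-sided limits, I may assume $V_a^b(p,f)<+\infty$. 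Moreover, by Theorem \ref{thm_wbv_funqciebis_shemosazgvruloba} the function $f$ is bounded, so any one-sided limit, if it exists, is finite, and its existence is equivalent to the Cauchy condition.

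Then I fix an interior point $x_0$ and suppose the right-hand limit fails to exist. By the Cauchy criterion there is $\eps>0$ such that for every $\delta>0$ one can find $x,y\in(x_0,x_0+\delta)$ with $|f(x)-f(y)|\ge\eps$. I would iterate this to extract a sequence of pairwise non-overlapping intervals $Q_k\subset(x_0,x_0+\delta)$, each with endpoints chosen so that $|f(Q_k)|\ge\eps$: having produced $Q_1,\dots,Q_{k-1}$ to the right of $x_0$, I shrink $\delta$ below the distance from $x_0$ to the left endpoint of $Q_{k-1}$ and apply the failure of Cauchy's condition on $(x_0,x_0+\delta)$ to obtain the next pair, which then lies strictly to the left of all previously chosen intervals.

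For each $N$ the intervals $Q_1,\dots,Q_N$ can be completed to a finite partition $P\in\Pi[a;b]$, so from the definition of the functional, using that $t\mapsto t^{\bar{p}(Q_k)}$ is increasing and $|f(Q_k)|\ge\eps$,
$$
V_a^b(p,f)\ge\sum_{k=1}^N|f(Q_k)|^{\bar{p}(Q_k)}\ge\sum_{k=1}^N\eps^{\bar{p}(Q_k)}\ge N\cdot c,
$$
where $c:=\min_{t\in[1,\beta]}\eps^t>0$ is positive precisely because every $\bar{p}(Q_k)$ lies in the compact range $[1,\beta]$. Letting $N\to+\infty$ contradicts $V_a^b(p,f)<+\infty$, so the right-hand limit must exist; the argument for the left-hand limit is symmetric, and at the endpoints one considers only $f(a+)$ and $f(b-)$.

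The step I expect to be the main obstacle is the uniform lower bound $\eps^{\bar{p}(Q_k)}\ge c>0$, which is exactly where essential boundedness enters: without the bound $\bar{p}(Q_k)\le\beta$ the exponents could grow without limit and, when $\eps<1$, the terms $\eps^{\bar{p}(Q_k)}$ could tend to zero fast enough for the series to converge. This is the mechanism behind the counterexample for unbounded exponents, and it explains why the boundedness hypothesis cannot simply be dropped here. The only remaining care is combinatorial, namely arranging the $Q_k$ to be genuinely non-overlapping so that all $N$ of them appear simultaneously in a single admissible partition.
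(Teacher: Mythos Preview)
Your argument is correct and follows essentially the same route as the paper: assume a one-sided limit fails, extract infinitely many pairwise disjoint subintervals on which $|f(Q_k)|$ is bounded below by a fixed positive constant, and use $\bar p(Q_k)\le\|p\|_\infty$ to force the variation sum to diverge. The paper phrases the extraction via $\liminf/\limsup$ and interleaved sequences rather than the Cauchy criterion, and splits into the cases $(d-c)/2<1$ and $(d-c)/2\ge1$ where you use the single constant $c=\min_{t\in[1,\beta]}\eps^{t}$; your explicit normalization $f\mapsto f/\lambda$ to ensure $V_a^b(p,f)<+\infty$ is a point the paper leaves implicit.
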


\begin{proof}
As usual let the symbols $f(x-)$ and $f(x+)$ stand for a left-hand and right-hand limits of the function $f$ at the point $x$. 
\par First, we prove the existence of the $f(x-)$. By the Theorem \ref{thm_wbv_funqciebis_shemosazgvruloba} the function $f$ is bounded. Let suppose the opposite that there exists a point $x\in(a;b]$ such that
$$
-\infty<c:=\liminf_{t\to x-} f(t)<\limsup_{t\to x-}f(t)=:d<+\infty.
$$
Then there exist the sequences $\{t_n\}_{n\in\mathbb{N}}$ and $\{s_n\}_{n\in\mathbb{N}}$ such that $t_n<s_n<t_{n+1}<s_{n+1}$ for all $n\in\mathbb{N}$, also $t_n\uparrow x$, $s_n\uparrow x$, $n\to+\infty$ and
$$
c=\lim_{n\to+\infty}f(s_n),\qquad d=\lim_{n\to+\infty}f(t_n).
$$
The last implies that there exists a natural number $N$ such that for all natural numbers $n>N$ we have
$$
f(t_n)-f(s_n)>\frac{d-c}{2}.
$$
Then for all natural $k$
\begin{align*}
V_a^b(p,f)&\ge \sum_{n=N+1}^{N+k}\left(f(t_n)-f(s_n)\right)^{\bar{p}([t_n;s_n])}\ge \sum_{n=N+1}^{N+k}\left(\frac{d-c}{2}\right)^{\bar{p}([t_n;s_n])}
\\&
\ge \left\{
\begin{array}{lcc}
\sum\limits_{n=N+1}^{N+k}\left(\frac{d-c}{2}\right)^{||p||_\infty} & \text{if} & \frac{d-c}{2}<1\\
\sum\limits_{n=N+1}^{N+k}\left(\frac{d-c}{2}\right) & \text{if} & \frac{d-c}{2}\ge1
\end{array}
\right\}=B(k).
\end{align*}
It is easy to see that 
$$
\lim_{k\to+\infty}B(k)=+\infty.
$$
Therefore $V_a^b(p,f)=+\infty$ which is contradiction. Similarly, we obtain the existence of $f(x+)$.
\end{proof}

\par One of the direct consequences of the Theorem \ref{thm_calmxrivi_zgvrebis_arseboba} is following

\begin{cor}
Let $p:[a;b]\to[1;+\infty)$ be essentially bounded function. Then if function $f\in WBV_{p(\cdot)}$ has a point of discontinuity it should be only the first kind.
\end{cor}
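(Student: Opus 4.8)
The plan is to treat this as an immediate consequence of Theorem \ref{thm_calmxrivi_zgvrebis_arseboba}, using only the standard classification of discontinuities. First I would recall the definitions: a point $x$ in the domain of $f$ is a discontinuity of the first kind when both one-sided limits $f(x-)$ and $f(x+)$ exist and are finite (with the obvious modification at the endpoints $a$ and $b$, where only $f(a+)$, respectively $f(b-)$, is considered), while $f$ fails to be continuous there; a discontinuity is of the second kind precisely when at least one of the relevant one-sided limits fails to exist as a finite real number.

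Next I would invoke Theorem \ref{thm_calmxrivi_zgvrebis_arseboba}, which asserts that for essentially bounded $p$ and $f\in WBV_{p(\cdot)}$ both $f(x-)$ and $f(x+)$ exist at every point of the domain, with the same endpoint convention. To guarantee that these limits are finite I would also cite Theorem \ref{thm_wbv_funqciebis_shemosazgvruloba}, by which every $f\in WBV_{p(\cdot)}$ is bounded; hence a limit of values of $f$ along any sequence, when it exists, is automatically a finite number.

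Finally I would conclude by elimination: since at every point both one-sided limits exist and are finite, the defining condition for a second-kind discontinuity can never be met. Therefore, if $f$ is discontinuous at a point, that discontinuity is necessarily of the first kind. There is essentially no obstacle here; the corollary shares the same essential-boundedness hypothesis as Theorem \ref{thm_calmxrivi_zgvrebis_arseboba}, so the only thing requiring attention is to match the hypotheses cleanly and to phrase the existence-of-one-sided-limits statement in the language of the classification of discontinuities. The result is thus a direct reformulation of the preceding theorem.
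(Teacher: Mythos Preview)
Your proposal is correct and matches the paper's own treatment: the corollary is presented there as a direct consequence of Theorem \ref{thm_calmxrivi_zgvrebis_arseboba}, with no separate proof given. Your added remark invoking Theorem \ref{thm_wbv_funqciebis_shemosazgvruloba} to guarantee finiteness of the one-sided limits is a harmless (and arguably helpful) clarification.
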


\begin{cor}
If $f\in BV^{p(\cdot)}$, then for all points $x\in[a;b]$ there exist $f(x-)$ and $f(x+)$. Therefore if function $f$ has a point of discontinuity it should be only the first kind.
\end{cor}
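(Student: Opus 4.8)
The plan is to reduce this statement to the analogous result already established for the $WBV_{p(\cdot)}$ space (the immediately preceding corollary) by routing everything through the inclusion $BV^{p(\cdot)}\subset WBV_{p(\cdot)}$ furnished by Theorem \ref{thm_chadgma_BV_WBV_shi}. The crucial observation is that the space $BV^{p(\cdot)}$ of Castillo--Merentes--Rafeiro is, by the very definition recalled in the introduction, built with an exponent satisfying $\sup p(x)<+\infty$; in particular $p$ is bounded, and hence also essentially bounded. This is exactly the hypothesis demanded both by the inclusion theorem and by the preceding corollary, so both results are directly applicable with no extra assumption required.

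Concretely, I would first invoke the boundedness of $p$ to apply Theorem \ref{thm_chadgma_BV_WBV_shi}, yielding $BV^{p(\cdot)}\subset WBV_{p(\cdot)}$; thus any $f\in BV^{p(\cdot)}$ is in particular an element of $WBV_{p(\cdot)}$. Next, since $p$ is essentially bounded, I would apply the preceding corollary — itself a consequence of Theorem \ref{thm_calmxrivi_zgvrebis_arseboba} — to conclude that $f$, now regarded as a member of $WBV_{p(\cdot)}$, admits left- and right-hand limits $f(x-)$ and $f(x+)$ at every point $x\in[a;b]$, with the usual one-sided convention $f(a+)$ and $f(b-)$ at the endpoints. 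The simultaneous existence of both one-sided limits at each point immediately excludes discontinuities of the second kind, so every discontinuity of $f$ is a jump, i.e. of the first kind.

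Because the argument is merely a composition of two results already proved, there is no genuine obstacle to surmount; the single point requiring care is checking that the boundedness hypothesis needed by both ingredients is automatically in force. This is guaranteed by the definition of $BV^{p(\cdot)}$ itself, in which the exponent is taken with $\sup p(x)<+\infty$, so no separate boundedness assumption must be appended to the statement of the corollary.
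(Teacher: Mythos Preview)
Your proposal is correct and follows essentially the same approach as the paper: observe that $BV^{p(\cdot)}$ is by definition built with a bounded exponent, then invoke Theorem~\ref{thm_chadgma_BV_WBV_shi} for the inclusion $BV^{p(\cdot)}\subset WBV_{p(\cdot)}$ and Theorem~\ref{thm_calmxrivi_zgvrebis_arseboba} (or its immediate corollary) for the existence of one-sided limits.
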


\begin{proof}
Since, $BV^{p(\cdot)}$ is defined only for bounded exponents we conclude that $p$ is bounded. Therefore by Theorem \ref{thm_calmxrivi_zgvrebis_arseboba} and Theorem \ref{thm_chadgma_BV_WBV_shi} we obtain the desired result.
\end{proof}

\par Naturally, there raises a question: Does the same result maintain if the exponent is not essentially bounded? As it turned out, the answer to this question is negative.

\begin{thm}
There exists essentially unbounded exponent $p:[a;b]\to[1;+\infty)$ and function $f\in WBV_{p(\cdot)}$ such that $f(a+)$ does not exists.
\end{thm}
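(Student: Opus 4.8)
The plan is to work on $[0;1]$ and to produce a single explicit pair $(p,f)$. I take the dyadic points $x_n=2^{-n}$, the gaps $I_n=(x_{n+1};x_n)$, the piecewise constant exponent $p(x)=n$ for $x\in I_n$ (with any admissible value on the null set $\{x_n\}\cup\{0\}$), and the ``spike'' function $f=\tfrac12\sum_{n}\chi_{\{x_n\}}$. Before anything else I would record the cheap facts about this choice: $p\ge 1$; $p$ is Lebesgue integrable, since $\int_0^1 p=\sum_n n\,|I_n|=\sum_n n\,2^{-(n+1)}<+\infty$; and $p$ is essentially unbounded, because $p=n$ on the positive-measure set $I_n$, so $\operatorname{esssup}p=+\infty$. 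Finally $f(0)=0$ and $f$ oscillates at $0$: along $x_n\downarrow0$ we have $f(x_n)=\tfrac12$, while along any non-dyadic sequence tending to $0$ we have $f\equiv0$, so $\limsup_{x\to0+}f=\tfrac12\ne0=\liminf_{x\to0+}f$ and $f(0+)$ does not exist. It therefore only remains to prove $f\in WBV_{p(\cdot)}$, for which it suffices to show $V_0^1(p,f)<+\infty$, since a finite modular forces a finite Luxemburg norm: for $\lambda\ge1$ one has $V_0^1(p,f/\lambda)\le\lambda^{-1}V_0^1(p,f)$, using $\bar p(Q)\ge1$.

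To bound $V_0^1(p,f)$ I would fix an arbitrary partition and keep only the intervals $Q_k$ on which $f(Q_k)\ne0$. Since $f$ equals $\tfrac12$ exactly on the dyadic points and $0$ elsewhere, such an ``active'' interval must have exactly one endpoint equal to some $x_n$, and on it $|f(Q_k)|=\tfrac12$ (interior spikes are invisible to $f(Q_k)$). I split the active intervals into Type B (spike at the right endpoint, $Q=[u;x_n]$) and Type A (spike at the left endpoint, $Q=[x_n;v]$). The basic estimate is the elementary bound $\bar p(Q)\ge\inf_{Q}p$, i.e. the harmonic mean dominates the infimum, so each active term satisfies $|f(Q_k)|^{\bar p(Q_k)}\le 2^{-\inf_Q p}$. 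For a Type B interval, $Q$ lies to the left of $x_n$, where $p\ge n$, hence $\inf_Q p=n$ and the term is $\le 2^{-n}$; as each $x_n$ is the right endpoint of at most one partition interval, the Type B contribution is at most $\sum_n 2^{-n}=1$.

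The main obstacle is Type A, because an interval $[x_n;v]$ runs to the right into the gaps $I_{n-1},I_{n-2},\dots$ where the exponent is smaller, so $\inf_Q p$ can be far below $n$ and the naive per-spike bound fails. Here I would exploit disjointness. List the Type A intervals from right to left; for the $j$-th one let $I_{m_j}$ be the rightmost gap it meets, so that $\inf_Q p=m_j$ and the term is $\le 2^{-m_j}$. Because the intervals are pairwise disjoint and the gaps contain no spikes, the left endpoint of the next Type A interval sits at or beyond the right end of $I_{m_j}$; this forces the indices $m_1>m_2>\cdots$ to be strictly decreasing, hence distinct. Consequently the Type A contribution is at most $\sum_{m\ge1}2^{-m}=1$. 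Adding the two types, every partition sum is at most $2$; taking the supremum gives $V_0^1(p,f)\le2<+\infty$, so $f\in WBV_{p(\cdot)}$, which completes the construction.
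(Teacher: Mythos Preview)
Your construction is essentially the paper's: a spike function $f=\tfrac12\chi_A$ on a null sequence $A$ tending to $0$, paired with an exponent that blows up near $0$; the paper takes $A=\{1/n\}$ and $p=1+n$ on $(1/(n+1),1/n]$ and simply asserts $V_0^1(p,f)\le 2\sum 2^{-n}=2$, whereas your Type~A/Type~B split actually supplies that argument.

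Two small repairs are needed. First, your gaps $I_n=(2^{-(n+1)},2^{-n})$ with $n\ge1$ do not cover $(\tfrac12,1]$, so $p$ is left undefined there (and starting at $n=0$ would give $p=0$ on $I_0$, violating $p\ge1$); just declare $p\equiv1$ on $(\tfrac12,1]$, which adds at most one extra active term of size $\le\tfrac12$. Second, in the Type~A count the monotonicity goes the other way: listing right to left gives $m_1<m_2<\cdots$, not $m_1>m_2>\cdots$. Indeed, for the $j$-th Type~A interval $[x_{n_j},v_j]$ with $v_j\in I_{m_j}$ one has $n_j\ge m_j+1$, hence $x_{n_j}\le x_{m_j+1}$; the $(j{+}1)$-th Type~A interval lies strictly left of $x_{n_j}$ (it cannot abut, since its right endpoint is not a spike), so $v_{j+1}<x_{n_j}\le x_{m_j+1}$, forcing $m_{j+1}\ge m_j+1$. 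Your conclusion --- distinct $m_j$, hence $\sum_j 2^{-m_j}\le1$ --- is correct; only the stated direction and its one-line justification are garbled.
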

\begin{proof}
Without loss of generality assume that $[a;b]=[0;1]$. Let $A=\{1/n\:|\:n\in\mathbb{N}\}$, $f(x)=\chi_A(x)/2$ and
$$
p(x):=1+\sum_{n\in\mathbb{N}}n\cdot\chi_{\left(\frac{1}{n+1};\frac{1}{n}\right]}(x).
$$
It is easy to see that
$$
V_a^b(p,f)\le 2\cdot\sum_{n\in\mathbb{N}}\frac{1}{2^n}=2,
$$
so, $f\in WBV_{p(\cdot)}$, but $f(a+)$ does not exists.
\end{proof}

\par As we see for essentially unbounded exponents in corresponding $WBV_{p(\cdot)}$ space may exist a function with a second kind discontinuity at a point.

\par It is well known that the set of discontinuity points of the function with bounded variation in Jordan's sense at most is countable. This proposition stems from the fact that a function of bounded variation in Jordan's sense is represented as the difference of two non-decreasing functions, while the set of discontinuity points of a monotonic function is at most countable. It is clear that in general the function from space $WBV_{p(\cdot)}$ is not represented as the difference of the two non-decreasing functions. Nevertheless, a similar result is still fair for the functions of this class.

\begin{thm}
\label{thm_ckvetis_certilta_tvladoba}
Let $p:[a;b]\to[1;+\infty)$ be essentially bounded function, then the set of discontinuous points of the function $f\in WBV_{p(\cdot)}$ is at most countable.
\end{thm}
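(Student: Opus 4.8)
The plan is to leverage the existence of one-sided limits (Theorem~\ref{thm_calmxrivi_zgvrebis_arseboba}) to reduce the problem to counting ``jumps'' of a fixed minimal size, and then to bound the number of such jumps by the variation, which is finite after rescaling. Since $f\in WBV_{p(\cdot)}$, by definition of the Luxemburg norm I may fix $\lambda>0$ with $V_a^b(p,f/\lambda)\le 1$; the function $g:=f/\lambda$ has exactly the same discontinuity points as $f$, so it suffices to argue for $g$, which satisfies $V_a^b(p,g)<+\infty$. Writing $P:=\operatorname{esssup}p(x)<+\infty$, the first fact I would record is that $\bar p(Q)\le P$ for every interval $Q\subset[a;b]$: indeed $1/p(x)\ge 1/P$ a.e., so $\frac{1}{|Q|}\int_Q \frac{1}{p}\ge 1/P$ and the reciprocal is at most $P$. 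This uniform bound on the exponent is the crucial structural fact.

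By Theorem~\ref{thm_calmxrivi_zgvrebis_arseboba} the limits $g(x-)$ and $g(x+)$ exist at every point (and are finite, since $g$ is bounded by Theorem~\ref{thm_wbv_funqciebis_shemosazgvruloba}), so each discontinuity is of the first kind and can be measured by the jump $j(x):=\max\{|g(x)-g(x-)|,\,|g(x+)-g(x)|\}$, with the obvious one-sided conventions at $a$ and $b$. A point is a discontinuity precisely when $j(x)>0$, hence the discontinuity set equals $\bigcup_{m=1}^{\infty}D_m$ where $D_m:=\{x:\ j(x)>1/m\}$. It therefore suffices to show that each $D_m$ is finite.

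To bound the cardinality of $D_m$, fix finitely many points $x_1<\dots<x_L$ in $D_m$. At each $x_i$ one of the two one-sided jumps exceeds $1/m$; in the left case I choose $t_i<x_i$ so close that $|g(t_i)-g(x_i-)|<1/(2m)$, whence $|g([t_i;x_i])|>1/(2m)$, and symmetrically in the right case. Choosing the auxiliary points close enough to the respective $x_i$ makes the resulting intervals $Q_1,\dots,Q_L$ pairwise non-overlapping and ordered, so they complete to a partition of $[a;b]$. Then
$$
V_a^b(p,g)\ \ge\ \sum_{i=1}^{L}|g(Q_i)|^{\bar p(Q_i)}\ \ge\ L\cdot\left(\tfrac{1}{2m}\right)^{P},
$$
where the last inequality is the heart of the matter: since $\bar p(Q_i)\le P$ and $0<\tfrac{1}{2m}<|g(Q_i)|$, for $|g(Q_i)|<1$ one uses that $t\mapsto |g(Q_i)|^{t}$ is decreasing to get $|g(Q_i)|^{\bar p(Q_i)}\ge|g(Q_i)|^{P}\ge(1/(2m))^{P}$, while for $|g(Q_i)|\ge1$ the term is at least $1\ge(1/(2m))^P$. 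Consequently $L\le V_a^b(p,g)\,(2m)^{P}<+\infty$, so $D_m$ is finite, and the discontinuity set $\bigcup_m D_m$ is at most countable.

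The delicate points I expect to spend the most care on are, first, the disjointness and ordering of the intervals $Q_i$ so that they genuinely belong to a single partition, and second, the monotonicity argument with base below $1$, where the uniform bound $\bar p(Q)\le P$ is exactly what converts a pointwise jump estimate into a lower bound on the variation that is \emph{uniform} over all the chosen intervals. Everything else is routine bookkeeping.
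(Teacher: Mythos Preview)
Your proof is correct and follows essentially the same route as the paper: stratify the discontinuity set by jump size $1/m$, then use the uniform bound $\bar p(Q)\le \|p\|_\infty$ together with the finiteness of the variation to bound the cardinality of each stratum. The only cosmetic differences are that the paper argues by contradiction (assume uncountable, find an infinite stratum, force $V_a^b(p,f)=+\infty$) whereas you bound $\#D_m$ directly, and you are more careful than the paper in first rescaling by $\lambda$ so that $V_a^b(p,g)\le 1$ is genuinely finite---the paper tacitly treats $f\in WBV_{p(\cdot)}$ as if it meant $V_a^b(p,f)<+\infty$.
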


\begin{proof}
Suppose the opposite that the set of discontinuity points $E\subset[a;b]$ of the function $f\in WBV_{p(\cdot)}$ is not countable. By virtue of the Theorem \ref{thm_calmxrivi_zgvrebis_arseboba}, there exist one-sided limits at each point, including the points of the set $E$. Let $E_n\subset E$ be the set of such points for which hold inequalities $|f(x)-f(x-)|>1/n$ or $|f(x)-f(x+)|>1/n$. Since $E$ is not countable there exists a number $n\in\mathbb{N}$ such that the set $E_n$ is not finite. Indeed, if for all $n\in\mathbb{N}$ the set $E_n$ is finite, then we obtain that the set $E$ is a countable union of the finite sets, therefore the set $E$ is at most countable and we get the contradiction. So, the set $E_n$ is infinite. Since any infinite set contains a countable subset, without loss of generality assume that the set $E_n$ is countable. For any number $M>0$ there exists $N\in\mathbb{N}$ such that $N>n^{||p||_\infty}\cdot M$. From the set $E_n$ let us choose finite and increasing sequence of points $\{x_k\}_{k=0}^N$ and define
$$
\delta_N:=\frac{1}{2}\cdot\min_{k\in\{1,...,N\}}(x_k-x_{k-1}).
$$
For each member $x_k$ of this sequence choose point $x'_k\in(x_k-\delta_N;x_k+\delta_N)$ such that $|f(x_k)-f(x'_k)|>1/n$. By $Q'_k$ symbol, we denote interval $[x'_k;x_k]$ or $[x_k;x'_k]$. It is obvious that the intervals $Q'_k$ are disjoint. Consider some finite partition $P$ of the segment $[a;b]$ such that $\{Q'_k\}_{k=0}^N\subset P =\{Q_k\}_{k=0}^m $, then
\begin{align*}
V_a^b(p,f)&\ge \sum_{s=0}^m|f(Q_s)|^{\bar{p}(Q_s)}\ge \sum_{k=0}^{N-1}|f(Q'_k)|^{\bar{p}(Q'_k)}
\\&
\ge  \sum_{k=0}^{N-1}\left(\frac{1}{n}\right)^{\bar{p}(Q'_k)}\ge N\cdot \left(\frac{1}{n}\right)^{||p||_\infty}>M.
\end{align*}
Since the number $M$ was chosen arbitrarily we obtain that $V_a^b(p,f)=+\infty$, i.e. $f\notin WBV_{p(\cdot)}$.
\end{proof}

\begin{cor}
If $f\in BV^{p(\cdot)}$, then the set of discontinuity points of $f$ is at most countable.
\end{cor}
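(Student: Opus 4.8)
The plan is to reduce this statement to the countability result already established for the larger space $WBV_{p(\cdot)}$, exploiting the embedding proved in Theorem \ref{thm_chadgma_BV_WBV_shi}. First I would note that the space $BV^{p(\cdot)}$ is, by its very definition, only introduced for exponents satisfying $\sup_{x\in[a;b]}p(x)<+\infty$. In particular, any admissible $p$ for which $BV^{p(\cdot)}$ makes sense is bounded, hence a fortiori essentially bounded. This observation is what makes the two preceding theorems simultaneously applicable, with no additional assumptions on the exponent.

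Next I would invoke Theorem \ref{thm_chadgma_BV_WBV_shi}, which asserts that for a bounded exponent one has the inclusion $BV^{p(\cdot)}\subset WBV_{p(\cdot)}$. Consequently, a given $f\in BV^{p(\cdot)}$ automatically belongs to $WBV_{p(\cdot)}$. Since the notion of a discontinuity point of $f$ depends only on $f$ as a pointwise-defined function on $[a;b]$ and not on which of the two function spaces we regard $f$ as inhabiting, the set of discontinuity points of $f$ is unchanged when we pass to the larger space.

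Finally I would apply Theorem \ref{thm_ckvetis_certilta_tvladoba} to $f$ viewed as an element of $WBV_{p(\cdot)}$: because $p$ is essentially bounded, that theorem guarantees that the set of discontinuity points of $f$ is at most countable, which is precisely the desired conclusion. I do not expect any genuine obstacle here, as the corollary is a purely formal consequence of the two results proved above; the only point requiring attention is to confirm that the hypothesis ``essentially bounded'' demanded by Theorem \ref{thm_ckvetis_certilta_tvladoba} is met, and this is immediate from the definitional requirement $\sup p<+\infty$ built into the space $BV^{p(\cdot)}$.
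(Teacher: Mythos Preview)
Your proposal is correct and follows exactly the same route as the paper: note that $BV^{p(\cdot)}$ is defined only for bounded exponents, apply Theorem~\ref{thm_chadgma_BV_WBV_shi} to embed into $WBV_{p(\cdot)}$, and then invoke Theorem~\ref{thm_ckvetis_certilta_tvladoba}. The paper's own proof is the one-line version of what you wrote.
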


\begin{proof}
By definition of the space $BV^{p(\cdot)}$, Theorem \ref{thm_chadgma_BV_WBV_shi} and Theorem \ref{thm_ckvetis_certilta_tvladoba} we easily obtain the desired result.
\end{proof}

\par As above analogously here raise questions: What role plays the boundedness of the exponent in the previous theorem? Does the same result maintain if the exponent is not essentially bounded? As it turned out, the answer to the last question is negative.

\begin{thm}
There is essentially unbounded exponent $p:[a;b]\to[1;+\infty)$ and $f\in WBV_{p(\cdot)}$ such that the set of discontinuity points of $f$ has continuum power.
\end{thm}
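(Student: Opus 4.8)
The plan is to build the example on $[0;1]$ from the classical Cantor set. Let $C\subset[0;1]$ be the standard middle‑thirds Cantor set, set $f=\tfrac12\chi_C$, and redefine $f(0):=0$ to meet the normalization $f(a)=0$; this does not affect the discontinuity set. Since $C$ is perfect and nowhere dense, every $x\in C$ is simultaneously a limit of points of $C$ (where $f=\tfrac12$) and of points of $[0;1]\setminus C$ (where $f=0$); hence $\liminf$ and $\limsup$ of $f$ at each $x\in C$ equal $0$ and $\tfrac12$, so $f$ is discontinuous exactly on $C$, a set of cardinality continuum. It then remains to produce an essentially unbounded exponent $p$ with $f\in WBV_{p(\cdot)}$.

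The exponent I would use blows up as one approaches $C$. Put $\phi(s)=\max\{1,\,2+2\log_2(1/s)\}$ and define $p(x)=\phi(\mathrm{dist}(x,C))$ for $x\notin C$ and $p(x)=1$ on the null set $C$ (its value there is immaterial). Since $\{x:\ p(x)>M\}$ contains a full neighbourhood of the nonempty set $C$ (minus the null set $C$), it has positive measure for every $M$, so $p$ is essentially unbounded. The reason for measuring distance to $C$, rather than assigning a constant to each complementary gap, is the key estimate: for any interval $Q$ with an endpoint $c\in C$ one has $\mathrm{dist}(x,C)\le|x-c|$ on $Q$, so, as $1/\phi$ is increasing,
$$
\int_Q\frac{dx}{p(x)}\le\int_0^{|Q|}\frac{du}{\phi(u)}=:\Phi(|Q|)\le\frac{|Q|}{\phi(|Q|)},
$$
whence $\bar p(Q)\ge |Q|/\Phi(|Q|)\ge\phi(|Q|)$. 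Thus every interval touching $C$ automatically carries a large mean exponent, bounded below in terms of its length alone.

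With this I would estimate $V_0^1(p,f)$ directly. For a partition $P$, a term $|f(Q_k)|^{\bar p(Q_k)}$ is nonzero only when exactly one endpoint of $Q_k$ lies in $C$ (otherwise $f(Q_k)=0$), in which case $|f(Q_k)|=\tfrac12$ and, by the estimate above, the term is at most $(1/2)^{\phi(|Q_k|)}$. These ``bad'' intervals are pairwise disjoint, so grouping them by dyadic scale $|Q_k|\in(2^{-j-1};2^{-j}]$ there are at most $2^{j+1}$ at scale $j$, each contributing at most $(1/2)^{\phi(2^{-j})}=(1/2)^{2+2j}$; hence $\sum_{j\ge0}2^{j+1}(1/2)^{2+2j}=\sum_{j\ge0}2^{-j-1}=1$. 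Therefore $V_0^1(p,f)\le1$ for every partition, so $V_0^1(p,f)<+\infty$, and since $\bar p\ge1$ the finiteness of the modular forces $\|f\|_{WBV_{p(\cdot)}}<+\infty$, i.e. $f\in WBV_{p(\cdot)}$.

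The main obstacle, and the reason for this particular $p$, is controlling $\bar p(Q)$ for an interval that begins at a fine‑scale point of $C$ but stretches across a wide complementary gap: an exponent that is merely constant on each gap fails, because a thin sliver of a wide, low‑exponent gap can dominate the harmonic mean and keep $\bar p(Q)$ bounded even as $|Q|\to0$. Tying $p$ to $\mathrm{dist}(\cdot,C)$ removes this, since on the portion of $Q$ adjacent to its Cantor endpoint the integrand $1/p$ is forced to be small; the only remaining bookkeeping is to choose the growth of $\phi$ fast enough (here $\phi(s)\sim 2\log_2(1/s)$ suffices) to beat the factor $2^{j+1}$ counting disjoint intervals at each scale.
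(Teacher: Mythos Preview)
Your construction is correct, but it differs from the paper's. The paper also takes $f=\tfrac12\chi_{\mathcal C}$ on the Cantor set, yet uses precisely the exponent you dismiss: $p$ is set equal to the constant $2n$ on each complementary interval $I_n^k$ of generation~$n$. The accounting there is per gap rather than per length scale: at most two ``bad'' partition intervals (those with one endpoint in $\mathcal C$) can meet a given gap, and a bad $Q$ whose largest intersected gap has level $n$ satisfies $\bar p(Q)\ge 2n$, so the sum is dominated by $\sum_n 2\cdot 2^{n-1}(1/2)^{2n}<\infty$. Your objection therefore applies to your \emph{argument}, not to the gap-constant \emph{exponent}: with that $p$ the length-only bound $\bar p(Q)\ge\phi(|Q|)$ indeed fails (a short $Q$ grazing $I_1^1$ has $\bar p\approx 2$), so your dyadic grouping would break---but the per-gap count sidesteps this entirely. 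What your distance-based $p$ buys is exactly that uniform estimate in terms of $|Q|$ alone, which makes the proof portable to an arbitrary closed nowhere-dense null set in place of~$\mathcal C$; the paper's version is a bit shorter and avoids the auxiliary function $\phi$ and the integral bound. (One minor slip: after redefining $f(0)=0$, the interval $[0,d]$ with $d\in C$ has both endpoints in $C$ yet $|f(Q)|=\tfrac12$; since $0\in C$ your key estimate $\bar p(Q)\ge\phi(|Q|)$ still applies to it, so the final bound is unaffected.)
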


\begin{proof}
Without loss of generality suppose that $a=0$ and $b=1$. Let $\mathcal{C}$ denotes Cantor's set without point 0 and $f(x)=\frac{1}{2}\cdot\chi_{\mathcal{C}}(x)$. It is clear that for the given function the set $\mathcal{C}$ is a set of discontinuity points. Let's enumerate the contiguous intervals of the set $\mathcal{C}$ in the following way $I_1^1=(1/3;2/3)$, $I_2^1=(1/9;2/9)$, $I_2^2=(7/9;8/9)$, . . ., then each contiguous interval of the set $\mathcal{C}$ is denoted by $I_n^k$ symbol, where $k\in\{1,...,2^{n-1}\}$. Now we define the function in the exponent 
$$
p(x)=\chi_{\mathcal{C}}(x)+2\cdot\sum_{n=1}^{+\infty}n\cdot\sum_{k=1}^{2^{n-1}}\chi_{I_n^k}(x).
$$
\par It is remains to show that $f\in WBV_{p(\cdot)}$. Consider any finite partition $P=\{Q_s\}_{s=0}^m$ of the segment $[0;1]$. It is clear that for any $Q_s$ interval $f(Q_s)=0$ or $|f(Q_s)|=1/2$. If the endpoints of the interval $Q_s$ belong to the set $\mathcal{C}$ or the complement of the set $\mathcal{C}$ then $f(Q_s)=0$. Since $Q_s$ are disjoint intervals, then each interval $I_n^k$ intersects with at most two such intervals $Q_s$ that $|Q_s|\le |I_n^k|$ and $|f(Q_s)|=1/2$. So, for any partition of the segment $[0;1]$ we have
$$
\sum_{s=0}^m|f(Q_s)|^{\bar{p}(Q_s)}\le 2\cdot\sum_{n=1}^{+\infty}\frac{1}{2^n}=2.
$$
By the fact that partition was chosen arbitrarily, we obtain that $V_a^b(p,f)\le2$, i.e. $f\in WBV_{p(\cdot)}$.
\end{proof}

\par Now we will present the results concerning the additivity of the variation $V_a^b(p,\cdot)$.
\begin{thm}
\label{thm_anti_adiciuroba}
For any point $c\in(a;b)$ we have
$$
V_a^c(p,f)+V_c^b(p,f)\le V_a^b(p,f).
$$
\end{thm}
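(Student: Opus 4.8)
The plan is to exploit the crucial feature that the averaged exponent $\bar{p}(Q)$ depends only on the interval $Q$ itself and not on the ambient segment in which $Q$ is regarded as a subinterval. Indeed, from the definition $\bar{p}(Q)=\left(\frac{1}{|Q|}\int_{Q}\frac{1}{p(x)}dx\right)^{-1}$, the value assigned to a subinterval $Q\subset[a;c]$ is exactly the same whether we view $Q$ as a piece of a partition of $[a;c]$ or of $[a;b]$. This locality of the exponent is precisely what drives the superadditivity.

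First I would fix arbitrary finite partitions $P_1\in\Pi[a;c]$ and $P_2\in\Pi[c;b]$. Since $c$ is the common right endpoint of $[a;c]$ and left endpoint of $[c;b]$, the union $P:=P_1\cup P_2$ is a legitimate finite partition of $[a;b]$ having $c$ among its nodes. Because the subintervals coming from $P_1$ and from $P_2$ together list exactly the subintervals of $P$, and each subinterval retains its own exponent $\bar{p}(Q_k)$, we obtain
\begin{align*}
\sum_{Q_k\in P_1}|f(Q_k)|^{\bar{p}(Q_k)}+\sum_{Q_k\in P_2}|f(Q_k)|^{\bar{p}(Q_k)}&=\sum_{Q_k\in P}|f(Q_k)|^{\bar{p}(Q_k)}\\&\le V_a^b(p,f),
\end{align*}
where the final inequality is just the definition of $V_a^b(p,f)$ as a supremum over $\Pi[a;b]$.

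Next I would pass to the suprema one factor at a time. Holding $P_2$ fixed and taking the supremum over all $P_1\in\Pi[a;c]$ yields $V_a^c(p,f)+\sum_{Q_k\in P_2}|f(Q_k)|^{\bar{p}(Q_k)}\le V_a^b(p,f)$; then taking the supremum over all $P_2\in\Pi[c;b]$ gives the claimed bound $V_a^c(p,f)+V_c^b(p,f)\le V_a^b(p,f)$. The reasoning is insensitive to finiteness: should $V_a^c(p,f)$ be infinite, the first supremum already forces $V_a^b(p,f)=+\infty$ and the inequality holds trivially, and analogously for $V_c^b(p,f)$.

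I do not anticipate a genuine obstacle in this direction; the only points requiring care are the legitimacy of concatenating the two partitions at the shared node $c$ and the order in which the two suprema are taken. It is worth emphasizing that only the inequality $\le$ can be expected here. The reverse bound would require refining a partition of $[a;b]$ by inserting the node $c$, but splitting a straddling interval $Q$ into $Q'$ and $Q''$ replaces the single exponent $\bar{p}(Q)$ by two generally different exponents $\bar{p}(Q')$ and $\bar{p}(Q'')$, and there is no reason for $|f(Q')|^{\bar{p}(Q')}+|f(Q'')|^{\bar{p}(Q'')}$ to dominate $|f(Q)|^{\bar{p}(Q)}$. This failure is exactly the phenomenon that makes the full additivity of $V_a^b(p,\cdot)$ break down.
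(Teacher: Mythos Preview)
Your proof is correct and follows essentially the same approach as the paper: concatenate an arbitrary partition of $[a;c]$ with one of $[c;b]$ at the shared node $c$ to obtain a partition of $[a;b]$, bound the resulting sum by $V_a^b(p,f)$, and then pass to suprema. The only cosmetic difference is that you take the two suprema directly while the paper phrases the same step via an $\varepsilon$-approximation; your version also explicitly notes the trivial case when one of the one-sided variations is infinite.
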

\begin{proof}
For any $\varepsilon>0$ there exist a partition of the segment $[a;c]$ by points $a=x_0<x_1<...<x_n=c$ and a partition of the segment $[c;b]$ by points $c=y_0<y_1<...<y_m=b$ such that
$$
\sum_{k=1}^{n}|f([x_{k-1};x_k])|^{\bar{p}([x_{k-1};x_k])}>V_a^c(p,f)-\varepsilon/2.
$$
$$
\sum_{k=1}^{m}|f([y_{k-1};y_k])|^{\bar{p}([y_{k-1};y_k])}>V_c^b(p,f)-\varepsilon/2.
$$
Union of points of these partitions gives the specific partition of the segment $[a;b]$ by points $z_k$, where $z_k=x_k$, $k\in\{0,...,n\}$ and $z_{n+k}=y_k$, $k\in\{0,...,m\}$ (specific because the point $c$ always belongs to such partition), then
\begin{align*}
&
V_a^c(p,f)+V_c^b(p,f)-\varepsilon
\\&\qquad
<\sum_{k=1}^{n}|f([x_{k-1};x_k])|^{\bar{p}([x_{k-1};x_k])}+\sum_{k=1}^{m}|f([y_{k-1};y_k])|^{\bar{p}([y_{k-1};y_k])}
\\&\qquad
=\sum_{k=1}^{n+m}|f([z_{k-1};z_k])|^{\bar{p}([z_{k-1};z_k])}\le V_a^b(p,f).
\end{align*}
Since $\varepsilon$ was chosen arbitrarily we obtain the desired result.
\end{proof}

\par The following result shows an even more contrasting property of $WBV_{p(\cdot)}$ space compared to the Jordan and Wiener bounded variation classes. In general, if the exponent $p$ is not equivalent to the constant, then there might not exist such number $C>0$ that the following inequality holds
$$
V_a^b(p,f)\le C\cdot\left(V_a^c(p,f)+V_c^b(p,f)\right)
$$
for all $c\in(a;b)$ and for all $f\in WBV_{p(\cdot)}$. 
\par Let's introduce a notation
$$
\bar{p}_-^x(a;b):=\inf\{\bar{p}([c;d])\:|\:x\in[c;d]\subset[a;b]\}.
$$

\begin{thm}
Let given function $p:[a;b]\to[1;+\infty)$. If there exists point $x\in(a;b)$ such that
\begin{equation}
\label{cond_adiciurobis_uarkofistvis}
\bar{p}_-^x(a;b)<\max\{\bar{p}_-^x(a;x),\bar{p}_-^x(x;b)\},
\end{equation}
then for any number $C\ge1$ there exists function $f:=f_C$ such that $f\in WBV_{p(\cdot)}$ and
$$
V_a^b(p,f)>C\cdot\left(V_a^x(p,f)+V_x^b(p,f)\right).
$$
\end{thm}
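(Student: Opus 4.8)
The plan is to produce, for each $C\ge1$, a single two-valued ``jump'' function concentrated at $x$: the jump across $x$ will be registered by a straddling interval whose mean exponent $\bar{p}$ is as close as we like to $\bar{p}_-^x(a;b)$, while splitting the variation at $x$ forces one of the two one-sided pieces to use a strictly larger exponent. Since the jump height will be taken smaller than $1$, the smaller exponent produces the \emph{larger} value, and this is exactly what separates $V_a^b$ from $V_a^x+V_x^b$.

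First I would record the structural fact underlying the whole argument. Write $\alpha:=\bar{p}_-^x(a;b)$, $\beta_L:=\bar{p}_-^x(a;x)$, $\beta_R:=\bar{p}_-^x(x;b)$. Trivially $\alpha\le\min\{\beta_L,\beta_R\}$, and both $\beta_L\le\bar{p}([a;x])<\infty$ and $\beta_R\le\bar{p}([x;b])<\infty$, so all three numbers are finite and $\ge1$. I claim moreover that $\alpha$ equals the infimum of $\bar{p}([c;d])$ taken only over the strictly straddling intervals $c<x<d$. Indeed, since $p\ge1$ the map $t\mapsto\int_c^t 1/p$ is continuous, whence $\bar{p}([c;d])\to\bar{p}([c;x])$ as $d\downarrow x$ and $\bar{p}([c;d])\to\bar{p}([x;d])$ as $c\uparrow x$; thus the straddling infimum is $\le\beta_L$ and $\le\beta_R$, and being $\ge\alpha$ it must equal $\alpha$. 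The hypothesis \eqref{cond_adiciurobis_uarkofistvis} says precisely that at least one of $\beta_L>\alpha$, $\beta_R>\alpha$ holds.

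Next comes the construction. Assume $\beta_R>\alpha$ (the case $\beta_L>\alpha$ is handled identically by the companion function $h\chi_{[x;b]}$, whose value at $x$ is $h$ rather than $0$). Fix $h\in(0,1)$ and set $f:=h\chi_{(x;b]}$; note $f(a)=0$. I would then evaluate the three variations. The function $f$ vanishes on $[a;x]$, so $V_a^x(p,f)=0$. On $[x;b]$ it jumps from $0$ at $x$ to $h$ immediately to the right, so in any partition only the first subinterval $[x;y_1]$ contributes, giving $V_x^b(p,f)=\sup_{y_1}h^{\bar{p}([x;y_1])}=h^{\beta_R}$, because $h<1$ makes $t\mapsto h^t$ decreasing. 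On $[a;b]$ the function takes only the values $0$ and $h$ with a single crossing at $x$, so every partition has exactly one subinterval $[c;d]$ with $c\le x<d$ that contributes, and hence $V_a^b(p,f)=\sup_{c\le x<d}h^{\bar{p}([c;d])}=h^{\alpha}$, the last equality using that $\alpha$ is the straddling infimum from the previous step (the supremum has value $h^\alpha$ whether or not the infimum is attained).

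It remains to conclude and to note finiteness. From $0<h<1$ and $\alpha\ge1$ we get $V_a^b(p,f)=h^\alpha\le h<1$, so $f\in WBV_{p(\cdot)}$. Finally
\[
\frac{V_a^b(p,f)}{V_a^x(p,f)+V_x^b(p,f)}=\frac{h^\alpha}{h^{\beta_R}}=\Big(\tfrac1h\Big)^{\beta_R-\alpha},
\]
which tends to $+\infty$ as $h\to0^+$; so taking $h<C^{-1/(\beta_R-\alpha)}$ yields $V_a^b(p,f)>C\big(V_a^x(p,f)+V_x^b(p,f)\big)$, as required. The delicate point — and the step I would write out most carefully — is the chain of identities for the three variations, in particular that the value at the single point $x$ of the chosen indicator routes the jump into the one-sided piece carrying the larger exponent while leaving the full-interval variation at $h^\alpha$; the rest is the monotonicity flip coming from $h<1$ together with the straddling-infimum lemma.
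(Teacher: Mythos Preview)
Your proof is correct and follows essentially the same route as the paper: both construct $f=h\chi_{(x;b]}$ with a small jump $h\in(0,1)$, compute $V_a^x(p,f)=0$, $V_x^b(p,f)=h^{\beta_R}$, $V_a^b(p,f)=h^{\alpha}$, and then choose $h$ so that $(1/h)^{\beta_R-\alpha}>C$. Your straddling-infimum lemma (that $\alpha$ coincides with the infimum over intervals $c<x<d$) is a welcome addition, since it justifies the identity $V_a^b(p,f)=h^{\alpha}$ that the paper simply asserts.
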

\begin{proof}
Let \eqref{cond_adiciurobis_uarkofistvis} holds, without loss of generality suppose that 
$$
\bar{p}_-^x(x;b)=\max\{\bar{p}_-^x(a;x),\bar{p}_-^x(x;b)\}
$$
and $A:=\bar{p}_-^x(x;b)-\bar{p}_-^x(a;b)$. Choose number $K>C^{\frac{1-A}{A}}$ such that $KC\ge1$ and define function $f(t)=\frac{1}{KC}\chi_{(x;b]}(t)$. It is easy to see that $f\in WBV_{p(\cdot)}$ besides
$$
V_a^b(p,f)=\left(\frac{1}{KC}\right)^{\bar{p}_-^x(a;b)},
$$
On the other hand $V_a^x(p,f)=0$ and
$$
V_x^b(p,f)=\left(\frac{1}{KC}\right)^{\bar{p}_-^x(x;b)}.
$$
Then
\begin{align*}
&
C\cdot\left(V_a^x(p,f)+V_x^b(p,f)\right)=C\cdot V_x^b(p,f)=
\\&\qquad
=C\cdot\left(\frac{1}{KC}\right)^{\bar{p}_-^x(x;b)}=C\cdot\left(\frac{1}{KC}\right)^{\bar{p}_-^x(x;b)-\bar{p}_-^x(a;b)+\bar{p}_-^x(a;b)}=
\\&\qquad
=C\cdot\left(\frac{1}{KC}\right)^{A}\cdot \left(\frac{1}{KC}\right)^{\bar{p}_-^x(a;b)}=\frac{C^{1-A}}{K^A}\cdot V_a^b(p,f).
\end{align*}
Since, $\frac{C^{1-A}}{K^A}<1$ we get that $V_a^b(p,f)>C\cdot\left(V_a^x(p,f)+V_x^b(p,f)\right)$.
\end{proof}

\par The condition \eqref{cond_adiciurobis_uarkofistvis} seems to be nontransparent and we are trying to fix this. Consider $\bar{p}_-^x(a;b)$ by its definition we get
\begin{align*}
&
\bar{p}_-^x(a;b)=\inf\left\{\bar{p}([c;d])\:|\:x\in[c;d]\subset[a;b]\right\}=
\\&\qquad
=\inf\left\{\left(\frac{1}{d-c}\int_c^d\frac{1}{p(t)}dt\right)^{-1}\:|\:x\in[c;d]\subset[a;b]\right\}=
\\&\qquad
=\left(\sup\left\{\frac{1}{d-c}\int_c^d\frac{1}{p(t)}dt\:|\:x\in[c;d]\subset[a;b]\right\}\right)^{-1}=\left(M\left(\frac{1}{p}\right)(x)\right)^{-1}.
\end{align*}
Here the symbol $M$ denotes the Hardy-Littlewood maximal operator. By the analogous reasoning we obtain that 
$$
\bar{p}_-^x(a;x)=\left(M^-\left(\frac{1}{p}\right)(x)\right)^{-1}\quad \text{and} \quad \bar{p}_-^x(x;b)=\left(M^+\left(\frac{1}{p}\right)(x)\right)^{-1}.
$$
Where $M^-$ and $M^+$ denote the one-sided Hardy-Littlewood maximal operators. So, in the terms of Hardy-Littlewood maximal operator, we can rewrite condition \eqref{cond_adiciurobis_uarkofistvis} in the following form
$$
M\left(\frac{1}{p}\right)(x)>\min\left\{M^-\left(\frac{1}{p}\right)(x),M^+\left(\frac{1}{p}\right)(x)\right\}.
$$
The investigation of the class of such exponents for which there exists a point $x\in(a;b)$ such that above inequality holds is beyond of the scope of our paper and let it be an open problem. A particular example of such exponent is $p(t)=10\cdot\chi_{[a;(a+b)/2]}(t)+2\cdot\chi_{((a+b)/2;b]}(t)$. The reader can easily construct other exponents for which the above inequality holds for some point $x\in(a;b)$. We believe that mentioned inequality holds for all functions which are not equivalent to the constant.
\par Next, we will set the stage for proving Helly's principle of choice type result.
\begin{lem}
\label{lemma_variaciit_gansaz_funqciis_monotonuroba}
Let $F(x):=V_a^x(p,f)$ then $F$ is non-decreasing function on $[a;b]$.
\end{lem}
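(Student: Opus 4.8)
The plan is to reduce the monotonicity of $F$ to the superadditivity of the variation already established in Theorem \ref{thm_anti_adiciuroba}, together with the obvious non-negativity of the functional $V_c^d(p,\cdot)$. Fix two points $a\le x_1<x_2\le b$; the goal is to show $F(x_1)\le F(x_2)$, that is $V_a^{x_1}(p,f)\le V_a^{x_2}(p,f)$.

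First I would note that the argument proving Theorem \ref{thm_anti_adiciuroba} uses nothing special about the endpoints $a,b$: it applies verbatim to the segment $[a;x_2]$ split at the interior point $x_1$. This yields
$$
V_a^{x_1}(p,f)+V_{x_1}^{x_2}(p,f)\le V_a^{x_2}(p,f).
$$
Since every summand $|f(Q_k)|^{\bar{p}(Q_k)}$ is non-negative, $V_{x_1}^{x_2}(p,f)$ is a supremum of non-negative quantities, hence $V_{x_1}^{x_2}(p,f)\ge0$. Discarding this non-negative term gives $V_a^{x_1}(p,f)\le V_a^{x_2}(p,f)$, i.e. $F(x_1)\le F(x_2)$, which is exactly the desired monotonicity.

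Alternatively, and perhaps more cleanly, I would argue directly at the level of partitions, avoiding any appeal to the superadditivity theorem. Any partition $P$ of $[a;x_1]$ extends to a partition $P'$ of $[a;x_2]$ by adjoining the single interval $[x_1;x_2]$. The partition sum for $P'$ equals the partition sum for $P$ plus the extra non-negative term $|f([x_1;x_2])|^{\bar{p}([x_1;x_2])}$, so it dominates the sum for $P$ and is itself bounded above by $V_a^{x_2}(p,f)$. Taking the supremum over all partitions $P$ of $[a;x_1]$ then gives $V_a^{x_1}(p,f)\le V_a^{x_2}(p,f)$.

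There is essentially no hard step here. The only things to verify are that adjoining an interval to a partition never decreases the partition sum (because each exponent $\bar{p}(Q_k)$ is finite and the base $|f(\cdot)|$ is non-negative, so every term is non-negative) and that the extended configuration is a legitimate partition of $[a;x_2]$. Both are immediate, and the case $x_1=a$ is handled trivially since $V_a^a(p,f)=0$; the monotonicity of $F$ then follows with no further work.
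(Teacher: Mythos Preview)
Your proposal is correct. The paper's proof is precisely your ``alternative'' direct argument: extend an arbitrary partition of $[a;x_1]$ to one of $[a;x_2]$ by adjoining the single interval $[x_1;x_2]$, observe the sum only increases, and take the supremum. Your first route via Theorem~\ref{thm_anti_adiciuroba} is also valid and amounts to the same computation packaged differently; either way the content is identical to the paper's.
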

\begin{proof}
Since, $F(a)=0$ consider points $a<x<y\le b$, then for the any partition of the segment $[a;x]$ by points $a=t_0<t_1<...<t_n=x$ we gave partition of the segment $[a;y]$ by points $a=t_0<t_1<...<t_n<t_{n+1}=y$. Then it is clear that for all functions $f$ we get
$$
\sum_{k=0}^n|f(Q_k)|^{\bar{p}(Q_k)}\le \sum_{k=0}^{n+1}|f(Q_k)|^{\bar{p}(Q_k)}.
$$
If we take supremum to both sides of the inequality with respect to the all finite partitions of the segment $[a;x]$ we obtain:
\begin{align*}
F(x)&=\sup_{\Pi_{[a;x]}}\sum_{k=0}^n|f(Q_k)|^{\bar{p}(Q_k)}
\\
&\le \sup_{\Pi_{[a;x]}}\sum_{k=0}^{n}|f(Q_k)|^{\bar{p}(Q_k)}+|f(x;y)|^{\bar{p}(x;y)}
\\
&\le \sup_{\Pi_{[a;y]}}\sum_{k=0}^{m}|f(Q_k)|^{\bar{p}(Q_k)}=F(y).
\end{align*}
\end{proof}

\begin{rem}
Let's note that the above lemma is true for all functions not only for functions of the space  $WBV_{p(\cdot)}$. 
\end{rem}

\par With the above Lemma \ref{lemma_variaciit_gansaz_funqciis_monotonuroba} taken into account, we can prove Helly's principle of choice type result in the space $WBV_{p(\cdot)}$. The scheme of the proof follows closely the one given by Musielak and Orlicz (see \cite{musielak-orlicz}).

\begin{thm}
\label{thm_helis_teorema}
Let $p:[a;b]\to[1;+\infty)$ be essentially bounded function and $\mathcal{F}$ is a uniformly bounded infinite family of the functions of the space $WBV_{p(\cdot)}$. If the variations of the functions from $\mathcal{F}$ are bounded by the same number, then from the family $\mathcal{F}$ we can choose the sequence which is pointwise convergent to a function $f\in WBV_{p(\cdot)}$.
\end{thm}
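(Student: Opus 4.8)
The plan is to follow the classical Helly selection scheme, using the monotone variation function $F_f(x):=V_a^x(p,f)$ from Lemma \ref{lemma_variaciit_gansaz_funqciis_monotonuroba} as the principal tool. Write $M$ for the uniform bound on the family ($|f(x)|\le M$ for all $f\in\mathcal{F}$, $x\in[a;b]$) and $K$ for the common bound on the variations ($V_a^b(p,f)\le K$). The single decisive estimate I would isolate first is the following: by the superadditivity proved in Theorem \ref{thm_anti_adiciuroba} together with the fact that a one-interval partition already contributes to the variation, for every $f\in WBV_{p(\cdot)}$ and $a\le x<y\le b$ one has
$$
|f(y)-f(x)|^{\bar{p}([x;y])}\le V_x^y(p,f)\le F_f(y)-F_f(x).
$$
Because $p$ is essentially bounded we also have $1\le\bar{p}([x;y])\le\|p\|_\infty$, so this inequality converts small increments of the non-decreasing function $F_f$ into small increments of $f$, and does so uniformly over the family.

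First I would extract a subsequence along which the variation functions converge everywhere. Starting from any sequence in $\mathcal{F}$, the functions $F_{f_n}$ are non-decreasing by Lemma \ref{lemma_variaciit_gansaz_funqciis_monotonuroba} and uniformly bounded by $K$, so the classical Helly selection theorem for monotone functions yields a subsequence (not relabelled) with $F_{f_n}(x)\to\Phi(x)$ for every $x\in[a;b]$, with $\Phi$ non-decreasing. Let $S\subset[a;b]$ be the at most countable set of discontinuity points of $\Phi$ and set $D:=S\cup(\mathbb{Q}\cap[a;b])$, a countable dense subset of $[a;b]$. Using $|f_n(x)|\le M$, the Bolzano--Weierstrass theorem, and the Cantor diagonal procedure, I pass to a further subsequence so that $f_n(x)$ converges for every $x\in D$.

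Next I would upgrade convergence on $D$ to convergence at every point. Fix a continuity point $x$ of $\Phi$; every point outside the countable set $S\subset D$ is such, and on $D$ convergence is already in hand. Given $\eps>0$, continuity of $\Phi$ at $x$ and density of $D$ let me choose $y\in D$ with $|\Phi(y)-\Phi(x)|$ arbitrarily small; since $F_{f_n}(x)\to\Phi(x)$ and $F_{f_n}(y)\to\Phi(y)$, the increments $F_{f_n}(y)-F_{f_n}(x)$ are uniformly small for large $n$, and the displayed estimate (with exponent between $1$ and $\|p\|_\infty$) makes $|f_n(x)-f_n(y)|$ uniformly small. Splitting
$$
|f_n(x)-f_m(x)|\le|f_n(x)-f_n(y)|+|f_n(y)-f_m(y)|+|f_m(y)-f_m(x)|
$$
and using that the middle term tends to $0$ because $y\in D$, I conclude that $\{f_n(x)\}$ is Cauchy. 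Hence $f(x):=\lim_n f_n(x)$ exists for all $x\in[a;b]$, with $f(a)=0$.

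Finally, membership $f\in WBV_{p(\cdot)}$ follows from lower semicontinuity of the variation: for any finite partition $P=\{Q_k\}$ the sum $\sum_k|f(Q_k)|^{\bar{p}(Q_k)}$ is a finite sum, hence equals $\lim_n\sum_k|f_n(Q_k)|^{\bar{p}(Q_k)}\le K$, and taking the supremum over $P$ gives $V_a^b(p,f)\le K<+\infty$. I expect the main obstacle to be precisely the upgrading step: one must make the bound on $|f_n(x)-f_n(y)|$ uniform in $n$, and this is exactly where essential boundedness of $p$ is indispensable, since it caps the exponent $\bar{p}([x;y])$ by $\|p\|_\infty$ and thereby prevents a small variation increment from being amplified by an arbitrarily large root.
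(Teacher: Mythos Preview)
Your proposal is correct and follows essentially the same Helly--Musielak--Orlicz scheme as the paper: use Lemma~\ref{lemma_variaciit_gansaz_funqciis_monotonuroba} and classical Helly to make the variation functions converge, diagonalize on a countable dense set, and use the estimate $|f(y)-f(x)|^{\bar p([x;y])}\le V_x^y(p,f)\le F_f(y)-F_f(x)$ from Theorem~\ref{thm_anti_adiciuroba} to propagate convergence to continuity points of the limit variation function. The only organizational difference is that you fold the discontinuity set $S$ of $\Phi$ into $D$ before diagonalizing, whereas the paper diagonalizes first on the rationals and then a second time on the discontinuity set; you are also more explicit than the paper about where the hypothesis $\|p\|_\infty<\infty$ enters (capping $\bar p([x;y])$ so that a small increment of $F_f$ forces a small increment of $f$).
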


\begin{proof}
For all functions $f_\alpha\in\mathcal{F}$ define non-decreasing function $F_\alpha(x)=V_a^x(p,f_\alpha)$ (see Lemma \ref{lemma_variaciit_gansaz_funqciis_monotonuroba}). By Helly’s principle of choice for sequences of monotonic functions, we get that there exists a sequence of functions $\{F_n\}$ convergent to a non-decreasing function $F$ at every point $x$ of the interval $[a;b]$. The sequence $\{F_n\}$ uniquely defines the sequence $\{f_n\}$ of the functions from $\mathcal{F}$. By the Cantor diagonal method we extract from the sequence $\{f_n\}$ the subsequence $\{f_{n_k}\}$ which is convergent at each point of the set $E=(\mathbb{Q}\cap[a;b])\cup\{a,b\}$. Suppose $f_{n_k}\to f$, $k\to+\infty$ on the set $E$. Also it is clear that $F_{n_k}\to F$, $k\to+\infty$ on the segment $[a;b]$.
\par Assume that $x_0$ is irrational number at which $F$ is continuous. We will prove that the number sequence $f_{n_k}(x_0)$ is convergent. For any number $\varepsilon>0$ choose a rational number $t$ such that $x_0<t\in(a;b)$ and 
$$
0\le F(t)-F(x_0)<\varepsilon^{\bar{p}(x_0;t)}/3.
$$
Choose number $N_1\in\mathbb{N}$ such that for all $k>N_1
$ we have 
$$
|F_{n_k}(t)-F(t)|<\varepsilon^{\bar{p}(x_0;t)}/3 \quad \text{and} \quad |F_{n_k}(x_0)-F(x_0)|<\varepsilon^{\bar{p}(x_0;t)}/3, 
$$
then when $k>N_1$ the Theorem \ref{thm_anti_adiciuroba} gives
\begin{align*}
&|f_{n_k}(t)-f_{n_k}(x_0)|^{\bar{p}(x_0;t)}\le V_{x_0}^t(p,f_{n_k})
\\&\qquad
\le V_a^t(p,f_{n_k})-V_a^{x_0}(p,f_{n_k})=F_{n_k}(t)-F_{n_k}(x_0)
\\&\qquad
\le|F_{n_k}(t)-F(t)| + |F(t)-F(x_0)| + |F(x_0)-F_{n_k}(x_0)|<\varepsilon^{\bar{p}(x_0;t)}.
\end{align*}
We obtain that
$$
|f_{n_k}(t)-f_{n_k}(x_0)|<\varepsilon,
$$
Now choose number $N_2$ such that when $m,s>N_2$ we get
$$
|f_{n_m}(t)-f_{n_s}(t)|<\varepsilon.
$$
If $m,s>N:=\max\{N_1,N_2\}$ then
\begin{align*}
&|f_{n_m}(x_0)-f_{n_s}(x_0)|
\\&\qquad
=|f_{n_m}(x_0)-f_{n_m}(t)+f_{n_m}(t)-f_{n_s}(t)+f_{n_s}(t)-f_{n_s}(x_0)|
\\&\qquad
\le|f_{n_m}(x_0)-f_{n_m}(t)|+|f_{n_m}(t)-f_{n_s}(t)|+|f_{n_s}(t)-f_{n_s}(x_0)|<3\varepsilon.
\end{align*}
So, $f_{n_k}(x_0)$ is convergent in the points of continuity of the $F$. Since the set of discontinuity points of non-decreasing function $F$ is at most countable and the sequence of the functions $f_{n_k}$ is uniformly bounded using again the Cantor diagonal method we extract subsequence $\{f_{n_{k_s}}\}$ which is convergent at each point of the segment $[a;b]$. Also it is evident that $V_a^b(p,f)$ is a finite number. Indeed since for each number $s$ we have $V_a^b(p,f_{n_{k_s}})\le M$, this means that for all finite partitions of $[a;b]$ we have
$$
\sum_{i=1}^m|f_{n_{k_s}}(Q_i)|^{\bar{p}(Q_i)}\le M.
$$ 
Letting $s\to+\infty$, we obtain that
$$
\sum_{i=1}^m|f(Q_i)|^{\bar{p}(Q_i)}\le M.
$$
The last inequality holds for all finite partitions of the segment $[a;b]$, therefore $V_a^b(p,f)\le M$, i.e. $f\in WBV_{p(\cdot)}$.
\end{proof}

\end{document}